\pgfplotsset{compat=1.5}
\newcommand{\hp}{s}
\newcommand{\dx}{\,dx} 
\newtheorem{theorem}{Theorem}
\newtheorem{definition}[theorem]{Definition}
\newtheorem{corollary}[theorem]{Corollary}
\newtheorem{remark}[theorem]{Remark} 
\begin{document}
\title{Preconditioning of a coupled Cahn--Hilliard Navier--Stokes system}


\author{Jessica Bosch\thanks{Department of Computer Science, The University of British Columbia, Vancouver, BC, V6T 1Z4, Canada (jbosch@cs.ubc.ca).} 
\and Christian Kahle\thanks{Chair of Optimal Control, Technical University M\"unchen, Boltzmannstr.~3, 85748 Garching bei M\"unchen (Christian.kahle@ma.tum.de).} 
\and Martin Stoll\thanks{Numerical Linear Algebra for Dynamical Systems, Max Planck Institute for Dynamics of Complex Technical Systems, Sandtorstr.~1, 39106 Magdeburg, Germany (stollm@mpi-magdeburg.mpg.de).}}

\maketitle

\begin{abstract}
Recently, Garcke et al. [H. Garcke, M. Hinze, C. Kahle, Appl. Numer. Math. 99
(2016), 151--171)] developed a consistent discretization
scheme for a thermodynamically consistent diffuse interface model 
for incompressible two-phase flows with different densities
[H. Abels, H. Garcke, G. Gr{\"u}n, Math. Models Methods Appl. Sci. 22(3)
(2012)].
At the heart of this method lies the solution of large and sparse linear systems that arise in a 
semismooth Newton method. 

In this work we propose the use of 
preconditioned Krylov subspace solvers using effective Schur complement approximations. 
Numerical results illustrate the efficiency of our approach. In particular, our preconditioner is shown to be robust 
with respect to parameter changes.

\noindent{\bf Keywords:} Navier--Stokes, Cahn--Hilliard, two-phase flow,
preconditioning, Schur complement approximation
\end{abstract}


\section{Introduction}
\label{sec:intro}

In recent years numerical simulation of two-phase flow and multiphase flow has
attained growing attention both from the modeling point of view and the
development of schemes for the numerical realization. This especially holds
for so called diffuse interface or phase-field models, where the distribution of
the two phases, i.e.
the regions where the two phases are located, is encoded in a smooth indicator
function, called phase field.
In this work we consider the particular problem of solving linear systems
arising during Newton's method for the solution of nonlinear equations 
appearing in an energy stable time stepping scheme for a 
thermodynamically consistent model for two-phase flow.

We use the thermodynamically consistent model proposed in
\cite{AbelsGarckeGruen_CHNSmodell} and the numerical scheme presented in
\cite{GarckeHinzeKahle_CHNS_AGG_linearStableTimeDisc}. The model
is based on a phase field representation of the
two-phase fluid structure and
 couples the Navier--Stokes equations to the Cahn--Hilliard equations in a
 thermodynamically consistent way, i.e., an energy equality holds.
  
Before we state the  model and the scheme below in Section~\ref{sec:CHNS},
let us briefly comment on the development of models for two-phase flows and
especially on numerical schemes for these. The first model for two-phase flow
using a phase field representation is the  model 'H' proposed in
\cite{HohenbergHalperin1977}. It couples the Navier--Stokes equation and the
Cahn--Hilliard equation \cite{CahH58} in a thermodynamically consistent way.
It is only valid for fluids of equal density. Since then
several attempts have been made for a generalization to fluids of different
densities, see e.g. \cite{Lowengrub_CahnHilliard_and_Topology_transitions,
ShenYang_CHNS_DingSpelt_Consistent,
Boyer_two_phase_different_densities}. 
The first thermodynamically consistent model, that contains the same coupling
between the Navier--Stokes and the Cahn--Hilliard model is proposed in
\cite{AbelsGarckeGruen_CHNSmodell}. Especially for equal density fluids, this
model equals the model 'H'.

In the few years since the invention of this model, several groups proposed
schemes for its numerical realization and we refer here only to
\cite{Tierra_Splitting_CHNS,
GruenMetzger__CHNS_decoupled,
Gruen_Klingbeil_CHNS_AGG_numeric,
Aland__time_integration_for_diffuse_interface,
Hintermueller_Keil_Wegner__OPT_CHNS_density,
GarckeHinzeKahle_CHNS_AGG_linearStableTimeDisc}.
The first two schemes decouple the Navier--Stokes and the Cahn--Hilliard
equation. This means that on each time instance, the Cahn--Hilliard equation can
be solved in advance and the solution can be used to solve the Navier--Stokes
equation afterwards. The systems are decoupled 
by using an augmented velocity field in the Cahn--Hillard equation,
that then only depends on the velocity field from the old time instance and thus
can be solved before turning to the Navier--Stokes equation. This procedure
adds some diffusion outside of the interface to the Cahn--Hilliard
equation. In this way, the defects from using the old velocity field can be
balanced with numerical dissipation that arises due to time discretization to
obtain an energy stable scheme.

The other papers propose fully coupled schemes
and especially in \cite{Aland__time_integration_for_diffuse_interface} benefits from using a fully coupled scheme are investigated.
At the heart of the numerical realization of such fully coupled schemes large
and sparse linear systems have to be solved, that have a block structure, namely
\begin{displaymath}
 \mathcal{A}=\left(\begin{array}{c|c}
\boldsymbol{A}_{\textup{NS}} & \boldsymbol{C}_{I}\\\hline
\boldsymbol{C}_{T} & \boldsymbol{A}_{\textup{CH}}
\end{array}\right),
\end{displaymath}
where $\boldsymbol A_{\textup{NS}}$ and $ \boldsymbol A_{\textup{CH}}$ are
themselves block matrices arising from the discretization of the (linearized) Navier--Stokes equation
($\boldsymbol{A}_{NS}$)
and the (linearized) Cahn--Hilliard equation ($\boldsymbol{A}_{CH}$),
while $\boldsymbol{C}_T$ describes coupling through transport of the
interface and $\boldsymbol{C}_{I}$ describes coupling through interfacial
forces.

The arising sparse linear systems are usually of very large dimension, and in combination with three-dimensional experiments, the application of direct solvers such as 
UMFPACK \cite{Davis16} quickly becomes infeasible. As a result, iterative methods have to be employed 
(see, e.g., \cite{Greenbaum97,Saad03} for introductions to this field). The state-of-the-art iterative schemes are the so-called Krylov subspace methods with prominent 
representatives found in \cite{cg,minres,gmres}. The convergence behavior of the iterative schemes typically depends on the conditioning of the 
problem and the clustering of the eigenvalues. 
These properties are usually affected by the model parameters. Various parameters of different scales are 
involved in $\mathcal{A}$: Spatial 
mesh sizes, the time step size, the interfacial parameter, the mobility of the interface, a penalty parameter, 
the Reynolds number, and the surface tension. 
Therefore, the convergence behavior of the iterative solver needs to be enhanced using preconditioning techniques 
$\mathcal{A}\mathcal{P}^{-1}\tilde{\boldsymbol{z}}=\boldsymbol{b}$ or 
$\mathcal{P}^{-1}\mathcal{A}\boldsymbol{z}=\mathcal{P}^{-1}\boldsymbol{b}$, where $\mathcal{P}$ is an 
invertible matrix that is easy to invert and resembles $\mathcal{A}$. Our goal in this paper is the derivation and analysis of a preconditioner well suited for the coupled system $\mathcal{A}.$ To the best of our knowledge this is the first time that the preconditioning of the coupled Navier-Stokes-Cahn-Hilliard system is studied. While preconditioning for the Navier-Stokes equations has been investigated for a long time (see \cite{ElmSW05} and the references given therein), the development of preconditioners for Cahn-Hilliard systems is a rather new topic with several contributions found in \cite{BosSB14,BoyDN11}. The coupling of both systems poses serious challenges regarding robustness of the method with respect to the crucial system parameters. 
Hence, we derive an efficient preconditioner $\mathcal{P}$ tailored to the coupled Cahn--Hilliard Navier--Stokes system using
effective Schur complement approximations and (algebraic) multigrid developed for 
elliptic systems \cite{Falgout06,Saad03,RugS87}. For this we utilize recent developments for the preconditioning of the Navier-Stokes equations using a pressure-convection-diffusion approximation to the Navier-Stokes Schur complement \cite{ElmSW05} and a multigrid for the $(1,1)$-block of the discrete Navier-Stokes system. We then approximate the Schur complement of the overall system using the recently established techniques for the Cahn-Hilliard equations that are in turn based on an efficient approximation of the Schur complement of this block using a matching argument, well established in PDE-constrained optimization \cite{PeaW12,PSW11}.

The paper is organized as follows. In Section \ref{sec:CHNS}, we introduce the
two-phase flow equation under consideration. We state the thermodynamically
consistent discretization proposed in
\cite{GarckeHinzeKahle_CHNS_AGG_linearStableTimeDisc} and review major
analytical results. In Section \ref{sec:pre}, we develop and investigate our
preconditioner, while in Section \ref{sec:num}, we show its performance in a
benchmark example and investigate its robustness with respect to  the relevant
parameters of the system under investigation. 
Finally, Section \ref{sec:concl} summarizes our findings.


\section{The Cahn--Hilliard Navier--Stokes model}
\label{sec:CHNS}

In \cite{AbelsGarckeGruen_CHNSmodell}, the following thermodynamically consistent
diffuse interface model for the simulation of two-phase fluids with different densities
is proposed:

Let $\Omega \subset \mathbb{R}^n$, $n\in\{2,3\}$ denote an open, bounded domain
with Lipschitz boundary and outer normal $\nu_\Omega$, and $I = (0,T]$ denote a
time interval.
Inside $\Omega$ there are two immiscible phases and we introduce a smooth
indicator function $\varphi$, called phase field, such that $\varphi \approx +1$
indicates the one phase and $\varphi \approx -1$ indicates the
second phase. The values $|\varphi| \leq 1$ indicate a diffuse interface between
the phases.
The velocity and the pressure of the fluid are denoted by $v$ and
$p$; see \cite{AbelsGarckeGruen_CHNSmodell} for the precise definition of this 
terms. 
Adding a chemical potential $\mu$ as first variation of the underlying
Ginzburg--Landau energy, the model is given by the following set of equations:
\begin{align}
  \rho\partial_t v + \left( \left( \rho v + J\right)\cdot\nabla
  \right)v
  - \mbox{div}\left(2\eta Dv\right) + \nabla p =& \mu\nabla \varphi + \rho g &&
  \forall x\in\Omega,\, \forall t \in I,\label{eq:CHNS:CHNSstrong1}\\
  -\mbox{div}(v) = &0&&
  \forall x\in\Omega,\, \forall t \in I,\label{eq:CHNS:CHNSstrong2}\\
  \partial_t \varphi + v \cdot\nabla \varphi - b\Delta \mu = &0&&
  \forall x\in\Omega,\, \forall t \in I,\label{eq:CHNS:CHNSstrong3}\\
  -\sigma\epsilon \Delta \varphi + \frac{\sigma}{\epsilon}W'(\varphi) - \mu =&
  0&& \forall x\in\Omega,\, \forall t \in I,\label{eq:CHNS:CHNSstrong4}\\
  v(0,x) =& v_0(x)      &&    \forall x \in
  \Omega,\label{eq:CHNS:CHNSstrongIC1}\\
  \varphi(0,x) =& \varphi_0(x) &&\forall x \in
  \Omega,\label{eq:CHNS:CHNSstrongIC2}\\
  v(t,x) =& 0 &&\forall x \in \partial \Omega,\, \forall t \in
  I,\label{eq:CHNS:CHNSstrongBC1}\\
  \nabla \mu(t,x)\cdot \nu_\Omega =
  \nabla \varphi(t,x) \cdot \nu_\Omega =& 0  &&\forall x \in \partial
  \Omega,\, \forall t \in I,\label{eq:CHNS:CHNSstrongBC2}
\end{align}
with $J  = - \frac{\rho_2-\rho_1}{2}b\nabla \mu$.

Here $\rho = \rho(\varphi) := \frac{\rho_2-\rho_1}{2}\varphi +
\frac{\rho_2+\rho_1}{2}$
describes the interpolated density of the fluid, and
$\eta = \eta(\varphi) := \frac{\eta_2-\eta_1}{2} +
\frac{\eta_2+\eta_1}{2}\varphi$
the interpolated viscosity, where $\rho_1,\rho_2$ are the densities of the two
pure phases, while $\eta_1,\eta_2$ are the corresponding viscosities.
By $2Dv = \nabla v +(\nabla v)^t$ we denote the symmetrized gradient.
The gravitational acceleration is denoted by $g$. By $b$ we denote the constant
mobility of the interface.
The scaled surface tension (see \cite[Sec. 4.3.4]{AbelsGarckeGruen_CHNSmodell})
is given by $\sigma$ and the interfacial region between the phases has a
thickness of order $\mathcal{O}(\epsilon)$.
The free energy density of the interface is denoted by $W$ and here we use
\begin{align*}
  W(\varphi) =W^\hp(\varphi)= \frac{1}{2}\left(1-\varphi^2
  + \hp \max(0,\varphi-1)^2 + \hp \min(0,\varphi+1)^2 \right),
\end{align*}
where $\hp \gg 0$ denotes a
relaxation coefficient arising from the Moreau--Yosida relaxation of the
double-obstacle free energy
\begin{align*}
  W^\infty(\varphi) =
  \begin{cases}
\frac{1}{2}(1-\varphi^2) & \mbox{ if } |\varphi|\leq 1,\\
+\infty & \mbox{else};
\end{cases}
\end{align*}
see \cite{BloweyElliott_I,HintermuellerHinzeTber}. Note that the results
proposed in \cite{GarckeHinzeKahle_CHNS_AGG_linearStableTimeDisc} are valid for
a wider class of free energy densities.
Finally, the initial velocity is given by $v_0$ and the initial phase field by
$\varphi_0$. Note that for simplicity we use a no-slip boundary condition for the
velocity field, while the scheme is literally valid as long as no flux across $\partial\Omega$
appears, i.e. $ v\cdot\nu_\Omega  \equiv 0$ on $\partial\Omega$.
This especially includes free-slip conditions
\begin{align*}
  v \cdot \nu_\Omega = 0,
  \quad \nu_\Omega^{\perp}\cdot (2\eta Dv) \nu_\Omega = 0 
  \quad \mbox{ on } \partial\Omega,
\end{align*}
where $\nu_\Omega^\perp$ denotes the tangential on $\partial\Omega$.

Concerning existence and uniqueness of solutions, we refer to
\cite{AbelsDepnerGarcke_CHNS_AGG_exSol,
AbelsDepnerGarcke_CHNS_AGG_exSol_degMob,
Gruen_convergence_stable_scheme_CHNS_AGG}.


\bigskip

The following weak formulation of
\eqref{eq:CHNS:CHNSstrong1}--\eqref{eq:CHNS:CHNSstrongBC2} is proposed in
\cite{GarckeHinzeKahle_CHNS_AGG_linearStableTimeDisc}. 
\begin{definition}
We call $v$, $p$, $\varphi$, $\mu$ a
weak solution to \eqref{eq:CHNS:CHNSstrong1}--\eqref{eq:CHNS:CHNSstrongBC2}
if $v(0) = v_0$, $ \varphi(0) = \varphi_0$, and
\begin{align}
  \frac{1}{2}\int_{\Omega}\left( \partial_t(\rho v)+\rho\partial_t v \right) w\,dx
  +\int_{\Omega}2\eta Dv:Dw\,dx&\nonumber\\
  +a(\rho v + J,v,w) -(p,\mbox{div}w)
  =   \int_\Omega \mu\nabla \varphi w +\rho g w\,dx &\quad \forall w\in
  H^1_0(\Omega)^n, \label{eq:CHNS:CHNS1_weak}\\
  -(\mbox{div}v,q) = 0 &\quad \forall q\in
  L^2_{(0)}(\Omega), \label{eq:CHNS:CHNS2_weak}\\
  \int_\Omega\left(\partial_t\varphi + v\cdot \nabla \varphi\right) \Psi\,dx
  + \int_\Omega b\nabla \mu \cdot \nabla \Psi\,dx =0
  &\quad \forall \Psi \in  H^1(\Omega),
  \label{eq:CHNS:CHNS3_weak}\\
  \sigma \epsilon\int_\Omega\nabla \varphi\cdot\nabla \Phi\,dx
  +\frac{\sigma}{\epsilon}\int_\Omega W'(\varphi)\Phi\,dx
  - \int_\Omega \mu\Phi\,dx = 0 &\quad \forall \Phi \in H^1(\Omega),
  \label{eq:CHNS:CHNS4_weak}
\end{align}
is satisfied for almost all $t\in I$.
\end{definition}
Here, for $u\in L^3(\Omega)^n$, $v,w \in H^1(\Omega)^n$, we define the
antisymmetric trilinear form
\begin{align*}
  a(u,v,w) := \frac{1}{2} \left( \int_\Omega (( u\nabla) v) w \dx - \int_\Omega
  (( u\nabla) w) v \dx \right),
\end{align*}
and by $L^2_{(0)}(\Omega)$ we denote the space of square integrable functions
with mean value zero, i.e.
\begin{align*}
  L^2_{(0)}(\Omega) := \{ f \in L^2(\Omega)\,|\, |\Omega|^{-1}(f,1) = 0 \}.
\end{align*}
We stress, that formulation
\eqref{eq:CHNS:CHNS1_weak}--\eqref{eq:CHNS:CHNS4_weak} is based on a
reformulation of \eqref{eq:CHNS:CHNSstrong1}, that requires that $\rho$ is a
linear function of $\varphi$.
In \cite{AbelsBreit_weakSolution_nonNewtonian_DifferentDensities}  a
generalization of \eqref{eq:CHNS:CHNSstrong1}--\eqref{eq:CHNS:CHNSstrongBC2} is
presented, where this requirement might be dropped. This has to be
investigated in the future.

\begin{definition}
The energy of the system is the sum of the kinetic energy of the fluid and the
Ginzburg--Landau energy of the interface. Thus we define
\begin{align*}
  E(t) := E(v(t),\varphi(t)) := \int_\Omega\frac{1}{2} \rho |v|^2\dx +
  \sigma\int_\Omega\frac{\epsilon}{2} |\nabla \varphi |^2 
  +\frac{1}{\epsilon}W(\varphi)\dx.
\end{align*}
\end{definition}

The model \eqref{eq:CHNS:CHNSstrong1}--\eqref{eq:CHNS:CHNSstrongBC2} is called
\textit{thermodynamically consistent}, because the following energy
identity holds.

\begin{theorem}
[{\cite[Thm. 1]{GarckeHinzeKahle_CHNS_AGG_linearStableTimeDisc}}]
Assume that there exists a sufficiently smooth solution to
\eqref{eq:CHNS:CHNS1_weak}--\eqref{eq:CHNS:CHNS4_weak}.

Then, the following energy identity holds for $0<\tau<t$
\begin{equation}
\label{eq:CHNS:enerEst}
\begin{aligned}
E(t) + &\int_\tau^t \int_\Omega 2\eta(\varphi(s)) |Dv(s)|^2 + b|\nabla \mu(s)|^2
\dx\,ds \\
&= E(\tau) + \int_\tau^t \int_\Omega  \rho(\varphi(s)) g  v(s)\dx\,ds.
\end{aligned}
\end{equation}
This means that the gain and loss of energy can exactly be measured and that
in absence of outer forces (i.e. $g\equiv 0$) the energy can not increase with
time.
\end{theorem}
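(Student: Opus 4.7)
The plan is to test each of \eqref{eq:CHNS:CHNS1_weak}--\eqref{eq:CHNS:CHNS4_weak} with a suitably chosen function built from the unknowns themselves, sum the resulting four identities, and show that every coupling and pressure term either vanishes algebraically or cancels pairwise, leaving precisely the differential form of \eqref{eq:CHNS:enerEst}.

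I would first test \eqref{eq:CHNS:CHNS1_weak} with $w=v$. By the antisymmetry built into $a(\cdot,\cdot,\cdot)$, the convective term $a(\rho v+J,v,v)$ vanishes identically, independently of the specific form of $J$; this is the key reason for using the antisymmetric reformulation. The viscous term becomes $\int_\Omega 2\eta(\varphi)|Dv|^2\dx$, and the decisive algebraic observation is
\[
\tfrac{1}{2}\bigl(\partial_t(\rho v)+\rho\partial_t v\bigr)\cdot v \;=\; \tfrac{1}{2}\partial_t\bigl(\rho|v|^2\bigr),
\]
so the symmetrized time derivative produces exactly the rate of change of the kinetic part of $E$. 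I would then test \eqref{eq:CHNS:CHNS2_weak} with $q=p$ (normalizing the pressure to $L^2_{(0)}(\Omega)$) to eliminate the contribution $-(p,\mathrm{div}\,v)$.

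Testing \eqref{eq:CHNS:CHNS3_weak} with $\Psi=\mu$ yields the Cahn--Hilliard dissipation $\int_\Omega b|\nabla\mu|^2\dx$ together with the coupling terms $\int_\Omega\partial_t\varphi\,\mu\dx$ and $\int_\Omega(v\cdot\nabla\varphi)\mu\dx$. Finally, I test \eqref{eq:CHNS:CHNS4_weak} with $\Phi=\partial_t\varphi$; using $\nabla\varphi\cdot\nabla\partial_t\varphi = \tfrac{1}{2}\partial_t|\nabla\varphi|^2$ and $W'(\varphi)\partial_t\varphi=\partial_t W(\varphi)$, the first two terms collapse into the time derivative of the Ginzburg--Landau part of $E$, leaving a residual $-\int_\Omega\mu\partial_t\varphi\dx$. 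Summing the four identities, the two $\mu\partial_t\varphi$-terms cancel, and the capillary coupling $\int_\Omega\mu\nabla\varphi\cdot v\dx$ from the momentum equation cancels against the convective term $\int_\Omega(v\cdot\nabla\varphi)\mu\dx$ from the Cahn--Hilliard equation, so that
\[
\frac{d}{dt}E(t) + \int_\Omega 2\eta|Dv|^2 + b|\nabla\mu|^2\dx \;=\; \int_\Omega \rho g\cdot v\dx.
\]
Integrating from $\tau$ to $t$ delivers \eqref{eq:CHNS:enerEst}.

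The substantive point is not the calculation itself but the structural features that make it close. The antisymmetric formulation of the trilinear form eliminates $a(\rho v+J,v,v)$ without leaving a flux residue involving $J=-\tfrac{\rho_2-\rho_1}{2}b\nabla\mu$; the symmetrized time-derivative form $\tfrac{1}{2}(\partial_t(\rho v)+\rho\partial_t v)$ avoids an uncompensated $(\partial_t\rho)|v|^2/2$ contribution that would otherwise spoil the identity; and the capillary coupling $\mu\nabla\varphi$ is exactly matched to the Cahn--Hilliard transport $v\cdot\nabla\varphi$. The only analytic input needed is enough regularity to apply the chain rule to $\rho|v|^2$ and $W(\varphi)$ pointwise in time, which is precisely the \emph{sufficiently smooth solution} hypothesis of the theorem.
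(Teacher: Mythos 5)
Your proof is correct and follows exactly the approach the paper indicates: the paper imports this theorem from \cite{GarckeHinzeKahle_CHNS_AGG_linearStableTimeDisc}, and its own proof sketch of the discrete counterpart (Theorem~\ref{thm:FD:exSol}) uses precisely your test functions $w=v$, $q=p$, $\Psi=\mu$, $\Phi=\partial_t\varphi$ (there in difference-quotient form), with the same cancellations of the capillary and transport couplings. Your identification of the structural ingredients --- antisymmetry of $a(\cdot,\cdot,\cdot)$, the symmetrized time derivative yielding $\tfrac12\partial_t(\rho|v|^2)$, and the matched $\mu\nabla\varphi$ / $v\cdot\nabla\varphi$ terms --- is exactly what makes the identity close.
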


In \cite{GarckeHinzeKahle_CHNS_AGG_linearStableTimeDisc} a discretization scheme
is proposed, that resembles this property in the fully discrete setting.  We
summarize that scheme in the following.


\subsection{The stable scheme from \cite{GarckeHinzeKahle_CHNS_AGG_linearStableTimeDisc}}
\label{ssec:FD}
 For a numerical realization, we  discretize
\eqref{eq:CHNS:CHNS1_weak}--\eqref{eq:CHNS:CHNS4_weak} using 
a semi-implicit Euler discretization in time and the finite element method in
space.

Let $t_{-1}<0 = t_0 < t_1 < \ldots < t_{k-2}< t_{k-1} < t_k<\ldots<t_K=T$ denote
an equidistant subdivision of $I = (0,T]$ with fix step size $\tau := t_1-t_0$.
 Further, let $\mathcal T^k = \{ T_1,\ldots,T_{\Theta_k}\}$
denote a conforming
triangulation of $\overline \Omega$ with closed cells $ (T_i)_{i=1}^{\Theta_k}$,
where we assume that $\overline \Omega = \bigcup_{i=1}^{\Theta_k} T_i$. Here,
$k$ refers to the time instance, and we stress that we use different triangulations
on different time instances due to adaptive meshing.
On $\mathcal{T}^k$, we define the finite element spaces
\begin{align*} 
  V^k_1 &:= \{ v \in C(\overline\Omega) \,|\, v|_{T} \in \Pi^1(\Omega)\, \forall
  T \in \mathcal{T}^k \}
   =: \mbox{span}\{ b_1^i\}_{i=1}^{N_1^k},\\
  V^k_2 &:= \{ v \in C(\overline\Omega)^n \,|\, v|_{T} \in (\Pi^2(\Omega))^n\,
  \forall T \in \mathcal{T}^k, v|_{\partial \Omega} = 0 \}
   =: \mbox{span}\{  b_2^i\}_{i=1}^{N_2^k}.
\end{align*}
Here, $\Pi^l$ denotes the space of polynomials up to order $l$.
 We further introduce an $H^1$-stable prolongation
operator $P^k:H^1(\Omega) \to V^k_1$.
Possible operators are, e.g., the Cl\'ement operator or Lagrangian
interpolation, where we have to restrict the preimage to $C(\overline\Omega)
\cap H^1(\Omega)$.

To state the fully discrete approximation, we introduce $\varphi^k_h \in
V^k_1$  as fully discrete approximation of $\varphi(t_k)$,
$\mu^k_h \in V^k_1$ as fully discrete approximation of $\mu(t_k)$,
 $v^k_h \in V^k_2$ as fully discrete approximation of $v(t_k)$,
  and  $p_h^k\in V^k_1$ as fully discrete approximation of $p(t_k)$.
   Note that we use LBB-stable Taylor--Hood elements for the
  discretization of the Navier--Stokes equation.
  We further use the abbreviations $\rho^k := \rho(\varphi^k_h)$, $\eta^k :=
  \eta(\varphi^k_h)$.

\bigskip

The fully discrete variant of
\eqref{eq:CHNS:CHNS1_weak}--\eqref{eq:CHNS:CHNS4_weak}
is  as follows.

Let $\varphi^{k-2} \in V^{k-2}_1$, 
$\varphi^{k-1} \in V^{k-1}_1$,
$\mu^{k-1}\in V^{k-1}_1$, 
$v^{k-1} \in V^{k-1}_2$ be given.
Find $\varphi^k \in V^k_1$,
 $\mu^k \in V^k_1$, 
 $v^k\in V^k_2$, 
 and $p_h^k\in
V^k_1$ such that
\begin{align}
  \frac{1}{\tau}\left( \frac{\rho^{k-1} + \rho^{k-2} }{2}v_h^k -
  \rho^{k-2}v^{k-1},w\right)
  + a(\rho^{k-1}v^{k-1}+J^{k-1},v_h^{k},w)\nonumber\\
  +(2\eta^{k-1}Dv^{k}_h,D w) 
  -(p_h^k,\mbox{div} w)
  -(\mu^{k}_h\nabla\varphi^{k-1}+ \rho^{k-1}  g,w)&= 0 \,
  \forall w \in  V^k_2,\label{eq:FD:chns1}\\
 -(\mbox{div} v_h^k,q) &=0 \,
  \forall q \in  V^k_1,\label{eq:FD:chns2}\\
  \frac{1}{\tau}(\varphi^{k}_h-P^{k}\varphi^{k-1},\Psi)
  +(b\nabla   \mu^{k}_h,\nabla \Psi) -(v^{k}_h\varphi^{k-1},\nabla \Psi)
  &=0 \, \forall \Psi \in V^k_1,\label{eq:FD:chns3}\\
  \sigma\epsilon(\nabla \varphi^{k}_h,\nabla   \Phi)
  +\frac{\sigma}{\epsilon}\left(W_+^\prime(\varphi^{k}_h)+W^\prime_-(P^{k}\varphi^{k-1}),\Phi\right)
  -(\mu^{k}_h,\Phi)
  &=0 \, \forall \Phi \in V^k_1. \label{eq:FD:chns4}
\end{align}

By $W_+(\varphi) = \frac{\hp}{2}(\max(0,\varphi-1)^2 +
\min(0,\varphi+1)^2)$
we denote the convex part of $W$, and by $W_-(\varphi) =
\frac{1}{2}(1-\varphi^2)$ we denote the concave part.

\begin{remark}
Note that  \eqref{eq:FD:chns1} -- \eqref{eq:FD:chns4}
is a two-step scheme since it requires data from $t_{k-2}$ and $t_{k-1}$ to
evaluate the solution at time $t_k$. Especially, we require the somewhat
artificial data $\varphi^{-1}$.
In \cite{GarckeHinzeKahle_CHNS_AGG_linearStableTimeDisc}, 
a one-step scheme with a different time discretization is proposed for the
initialization of the two-step scheme  \eqref{eq:FD:chns1} --
\eqref{eq:FD:chns4}. Here, we argue as in
\cite{Hintermueller_Keil_Wegner__OPT_CHNS_density} that given $\varphi^{-1}$
and $v^0$ one can solve
\eqref{eq:FD:chns3} -- \eqref{eq:FD:chns4} to obtain the
missing data $\varphi^0$ and $\mu^0$ to start the two-step scheme.
Compare also \cite[Rem. 3]{GarckeHinzeKahle_CHNS_AGG_linearStableTimeDisc}
\end{remark}

\begin{theorem}
[{\cite[Thm. 2, Thm. 3]{GarckeHinzeKahle_CHNS_AGG_linearStableTimeDisc}}]
\label{thm:FD:exSol}
Let $\varphi^{k-2}\in V^{k-2}_1$, 
$\varphi^{k-1} \in V^{k-1}_1$, 
$\mu^{k-1} \in V^{k-1}_1$, 
$v^{k-1} \in V^{k-1}_2$ be given.
Then there exists a unique solution $v_h^k \in V^k_2$, $\varphi_h^k, \mu_h^k,
p^k_h \in V^k_1$ to \eqref{eq:FD:chns1}--\eqref{eq:FD:chns4} that
fulfills the following energy inequality
\begin{equation}
  \label{eq:FD:energyInequality}
  \begin{aligned}
    \int_\Omega \frac{1}{2}\rho^{k-1}\left|v^{k}_h\right|^2\dx
    + \sigma \int_\Omega \frac{\epsilon}{2}|\nabla \varphi^{k}_h|^2
    +\frac{1}{\epsilon}W(\varphi^{k}_h) \dx\\
    + \frac{1}{2}\int_\Omega \rho^{k-2}|v^{k}_h-v^{k-1}|^2\dx
    + \frac{\sigma\epsilon}{2}
    \int_\Omega |\nabla \varphi^{k}_h-\nabla P^k \varphi^{k-1}|^2\dx\\
    + \tau\int_\Omega 2\eta^{k-1}|Dv^{k}_h|^2\dx
    +\tau\int_\Omega b|\nabla \mu^{k}_h|^2\dx \\
    \leq
    \int_\Omega \frac{1}{2}\rho^{k-2}\left|v^{k-1}\right|^2\dx
    + \sigma\int_\Omega\frac{\epsilon}{2} |\nabla P^{k}\varphi^{k-1}|^2
    +\frac{1}{\epsilon}W(P^{k}\varphi^{k-1})\dx\\
    +\tau \int_\Omega  \rho^{k-1} g v^{k}_h\dx.
  \end{aligned}
\end{equation}
Moreover, this solution can be found by Newton's method.
\end{theorem}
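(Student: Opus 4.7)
The plan is to prove the three assertions of the theorem in the order: (i) any solution satisfies the energy inequality \eqref{eq:FD:energyInequality}, (ii) existence and uniqueness in $V_2^k \times V_1^k \times V_1^k \times V_1^k$, (iii) solvability by Newton's method. Since the discrete problem is finite dimensional, the first step does most of the work by supplying the a priori bound that drives the remaining two.

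For the energy inequality I would test \eqref{eq:FD:chns1} with $w = \tau v_h^k$, \eqref{eq:FD:chns2} with $q = \tau p_h^k$, \eqref{eq:FD:chns3} with $\Psi = \tau \mu_h^k$, and \eqref{eq:FD:chns4} with $\Phi = \varphi_h^k - P^k\varphi^{k-1}$, and add the four resulting identities. Antisymmetry of the trilinear form kills $a(\rho^{k-1}v^{k-1}+J^{k-1},v_h^k,v_h^k)$; the pair $(p_h^k,\mbox{div}\,v_h^k)$ drops out; and the interfacial/transport coupling $(\mu_h^k \nabla \varphi^{k-1},v_h^k)$ cancels against $(v_h^k \varphi^{k-1},\nabla \mu_h^k)$, which is precisely why $\varphi^{k-1}$ is the chosen ``upwind'' argument in both equations. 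The kinetic contribution is handled by the algebraic identity
\begin{equation*}
\bigl(\tfrac{1}{2}(\rho^{k-1}+\rho^{k-2})a - \rho^{k-2}b\bigr)\cdot a
= \tfrac{1}{2}\rho^{k-1}|a|^2 - \tfrac{1}{2}\rho^{k-2}|b|^2 + \tfrac{1}{2}\rho^{k-2}|a-b|^2
\end{equation*}
with $a=v_h^k$, $b=v^{k-1}$, producing both the new kinetic energy and the numerical-dissipation term. The Dirichlet energy is treated by the symmetric identity $2\nabla \varphi_h^k \cdot (\nabla \varphi_h^k - \nabla P^k \varphi^{k-1}) = |\nabla \varphi_h^k|^2 - |\nabla P^k\varphi^{k-1}|^2 + |\nabla(\varphi_h^k - P^k\varphi^{k-1})|^2$, and the potential term by convex--concave splitting: convexity of $W_+$ gives $W_+'(\varphi_h^k)(\varphi_h^k - P^k\varphi^{k-1}) \ge W_+(\varphi_h^k) - W_+(P^k\varphi^{k-1})$, while concavity of $W_-$ gives $W_-'(P^k\varphi^{k-1})(\varphi_h^k - P^k\varphi^{k-1}) \ge W_-(\varphi_h^k) - W_-(P^k\varphi^{k-1})$. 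Summing yields \eqref{eq:FD:energyInequality}.

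For existence I would exploit that with the data from $t_{k-2},t_{k-1}$ frozen, the quasilinear system is the optimality system of a strictly convex functional in $(v_h^k,\varphi_h^k,\mu_h^k)$ perturbed by a linear antisymmetric coupling; the energy estimate of Step 1 provides a coercive a priori bound so that a Brouwer/Leray--Schauder argument in the finite-dimensional Taylor--Hood space yields a solution, with $p_h^k$ recovered by LBB-stability modulo the zero-mean condition encoded in $L^2_{(0)}(\Omega)$. Uniqueness follows by subtracting two solutions and testing the difference against itself in the four equations: antisymmetry of $a(\cdot,\cdot,\cdot)$, monotonicity of $W_+'$ (since $W_+$ is convex, $(W_+'(\alpha)-W_+'(\beta))(\alpha-\beta)\ge 0$), and positive definiteness of the viscous and mobility forms leave only non-negative quadratic terms, forcing the differences to vanish. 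Finally, Newton convergence is obtained from the semismooth Newton framework of Hinterm\"uller et al.: $W_+'$ is piecewise affine with kinks at $\pm 1$ and hence Newton (semismooth) differentiable, each element of the generalized Jacobian is positive semidefinite with a strictly positive contribution on the active set, and combined with the coercivity already established the full Jacobian is invertible uniformly in the active-set configuration, giving local superlinear convergence.

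The main obstacle is Step 1: every cancellation must be done at the fully discrete level, and the placement of $\rho^{k-2}$ vs.\ $\rho^{k-1}$ in the momentum equation, as well as the choice of $\varphi^{k-1}$ in the transport and interfacial coupling terms, must be exactly the ones written in \eqref{eq:FD:chns1} and \eqref{eq:FD:chns3} for the cancellations to take place and for the numerical dissipation to appear with the correct sign. Once that bookkeeping is carried out cleanly, Steps 2 and 3 reduce to standard arguments.
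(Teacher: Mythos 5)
Your proposal follows essentially the same route as the paper's proof: the energy inequality is obtained from the test functions $w=v_h^k$, $q=p_h^k$, $\Psi=\mu_h^k$, $\Phi=\frac{1}{\tau}(\varphi_h^k-P^k\varphi^{k-1})$ (yours differ only by an overall factor of $\tau$) together with the convex--concave splitting of $W$, existence comes from Brouwer's fixed-point theorem, uniqueness from subtracting two solutions, and Newton solvability from the semismooth Newton differentiability of $W_+$ in the sense of Hinterm\"uller et al.\ plus invertibility of the derivative. The only difference is cosmetic ordering --- you establish the a priori estimate first and feed it into the existence and uniqueness arguments, whereas the paper states existence first --- so the two arguments coincide in substance.
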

\begin{proof}
The existence of a solution is shown using Brouwer's fixpoint theorem. The
energy inequality then follows from using 
$w = v^k_h$, $q = p_h^k$, $\Psi= \mu_h^k$ and 
$\Phi =\frac{1}{\tau}(\varphi_h^k - P^k\varphi^{k-1})$, summing the equations up and
using the properties of $W_+$ and $W_-$.
Then the uniqueness follows from considering two different solutions and showing
that they are equal, using the energy inequality.

To show that Newton's method is applicable, we have to show that system
\eqref{eq:FD:chns1}--\eqref{eq:FD:chns4} is Newton-differentiable and that 
the derivative is continuously invertible. Beside $W_+$ all terms are Frech\'et
differentiable and thus Newton differentiable. $W_+$ is Newton differentiable as
shown in \cite{Hintermueller_primaldualSet}. The derivative has a structure,
that is very similar to \eqref{eq:FD:chns1}--\eqref{eq:FD:chns4} and the
continuous invertability follows similar to  the existence of a unique
solution.
\end{proof}

\begin{remark}
Note that the energy inequality \eqref{eq:FD:energyInequality} bounds the energy
at time instance $k$ by the energy of $P^k\varphi^{k-1}$, i.e., the prolongation
of $\varphi^{k-1}$ to the triangulation $\mathcal T^k$, thus not by the 
energy at the old time instance.
To overcome this, in \cite{GarckeHinzeKahle_CHNS_AGG_linearStableTimeDisc}, a
postprocessing step is added to the adaptive concept that guarantees that the energy does not
increase through prolongation, where Lagrangian interpolation is used as
prolongation $P^k$. Including this step, we guarantee that in absence of outer
forces the energy of the system can not increase and
\eqref{eq:FD:energyInequality} resembles a fully discrete counterpart of
\eqref{eq:CHNS:enerEst}.
\end{remark}

\begin{remark}
Using the unique solution proposed in Theorem~\ref{thm:FD:exSol} in
\cite{GarckeHinzeKahle_CHNS_AGG_linearStableTimeDisc} the existence of a unique
solution to a corresponding time-discrete variant of
\eqref{eq:FD:chns1}--\eqref{eq:FD:chns4} is shown.
\end{remark}

Theorem \ref{thm:FD:exSol} states that we find the unique solution to
\eqref{eq:FD:chns1}--\eqref{eq:FD:chns4} 
by Newton's method.
To state the algorithm, let us compactly write  
\eqref{eq:FD:chns1}--\eqref{eq:FD:chns4} 
as 
\begin{align*}
  \left<F(v_h^k,p_h^k,\varphi_h^k,\mu_h^k),(w,q,\Psi,\Phi) \right> :=
  \left(
  (F^1(\ldots),w),
  (F^2(\ldots),q),
  (F^3(\ldots),\Psi),
  (F^4(\ldots),\Phi)
  \right)^t,
\end{align*}
where $(F^1(\ldots),w)$ abbreviates \eqref{eq:FD:chns1},
$(F^2(\ldots),\Psi)$ abbreviates \eqref{eq:FD:chns2},
$(F^3(\ldots),\Phi)$ abbreviates \eqref{eq:FD:chns3}, and
$(F^4(\ldots),\Psi)$ abbreviates \eqref{eq:FD:chns4}.
Then Newton's method generates the following sequence
\begin{align}
  DF(x^m)\delta x &= -F(x^m), \label{eq:FD:newtEq}\\
  x^{m+1} &= x^m + \delta x,\quad m=1,\ldots\nonumber
\end{align}
where $x$ abbreviates $(v_h^k,p_h^k,\varphi_h^k,\mu_h^k)$. 
We note that $F$ is due to our choice of $W$ only Newton differentiable, see
\cite{Hintermueller_primaldualSet,HintermuellerHinzeTber}.
A Newton derivative of $F$ is given by
\begin{equation}
  \label{eq:FD:DF}
\begin{aligned}
  \left< DF(v_h^k,p_h^k,\varphi_h^k,\mu_h^k)\right.&\left.(\delta v,\delta p,
  \delta \varphi,\delta \mu),(w,q,\Psi,\Phi)\right>:= \\
  &\frac{1}{\tau}\left( \frac{\rho^{k-1} + \rho^{k-2} }{2}\delta v,w\right)
  + a(\rho^{k-1}v^{k-1}+J^{k-1},\delta v,w)\\
  &+(2\eta^{k-1}D\delta v,D w)
  -(\delta p,\mbox{div}w)
  -(\delta \mu \nabla\varphi^{k-1},w)\\
  &-(\mbox{div} \delta v, q)\\
  &+\frac{1}{\tau}(\delta\varphi,\Psi)
  +(b\nabla   \delta \mu,\nabla \Psi)
  -(\delta v  \varphi^{k-1},\nabla\Psi)\\
  &+\sigma\epsilon(\nabla \delta \varphi,\nabla   \Phi)
  +\frac{\sigma}{\epsilon}
  (W_+^{\prime\prime}(\varphi^{k}_h)\delta  \varphi,\Phi)
  -(\delta\mu,\Phi),
\end{aligned}
\end{equation}
where
\begin{align*}
  W_+^{\prime\prime}(\varphi_h^k(x)) :=
  \begin{cases}
  		\hp & \mbox{if } |\varphi_h^k(x)| > 1,\\
  		0 & \mbox{else}
  \end{cases}
  \quad
  \forall x \in \Omega.
\end{align*}
Solving \eqref{eq:FD:newtEq} leads to the numerical solution of a large linear
system that we pose next.

Using the basis $\{b_2^i\} \subset V^k_2$ and $\{b_1^i\} \subset V^l_1$ we 
define the following matrices and vectors:  
  \begin{align*}
    & & & A := M_2 + T_a + K_2, & & & \\
    & M_2 = (m^2_{ij})_{i,j=1}^{N_2},&
    & T_a = (t^a_{ij})_{i,j=1}^{N_2},&
    & K_2 = (k^2_{ij})_{i,j=1}^{N_2},&\\
    & m^2_{ij}:=
  \left(\frac{\rho^{k-1}+\rho^{k-2}}{2\tau}b_2^j,b_2^i\right),&
  & t^a_{ij} := a(\rho^{k-1}v^{k-1} + J^{k-1},b_2^j,b_2^i), &
  &k^2_{ij}:=   (2\eta^{k-1}Db_2^j,Db_2^i),
  \end{align*}
  \begin{align*}
   & B := (b_{ij})_{i=1,\ldots,N_1}^{j=1,\ldots,N_2},&
   &  U := (\xi_{ij})_{i=1,\ldots,N_2}^{j=1,\ldots,N_1},&
  &   T := (t_{ij})_{i=1,\ldots,N_1}^{j=1,\ldots,N_2},\\
   &  b_{ij} = -(\mbox{div}b_2^j,b_1^i), &
   &  \xi_{ij} = -(b_1^j\nabla \varphi^{k-1},b_2^i), &
   & t_{ij} = (b_2^j \varphi^{k-1},\nabla b_1^i),
  \end{align*}
  \begin{align*}
     &M_1 := (m^1_{ij})_{ij = 1,\ldots,N_1}, &
     & K_1 := (k^1_{ij})_{ij = 1,\ldots,N_1}, &
      &\Lambda := (\lambda_{ij})_{ij=1}^{N_1},\\
      & m^1_{ij} = (b_1^j,b_1^i), &&
       k^1_{ij} = (\nabla b_1^j,\nabla b_1^i),&&
       \lambda_{ij} = (W_+^{\prime\prime}(\varphi^m)b_1^j,b_1^i),
  \end{align*}
  \begin{align*}
  F_1 = (f^1_j)_{j=1}^{N_2},& \quad 
  f^1_j = (F^1(x^m),b_2^j), &
  F_2 = (f^2_j)_{j=1}^{N_1},& \quad 
  f^2_j = (F^2(x^m),b_1^j),\\
  F_3 = (f^3_j)_{j=1}^{N_1},& \quad
  f^3_j = (F^3(x^m),b_1^j), &
  F_4 = (f^4_j)_{j=1}^{N_1},& \quad 
  f^4_j = (F^4(x^m),b_1^j).
\end{align*} 
Here, $\varphi^m$ denotes the $m-th$ iterate of Newton's method for the
approximation of $\varphi_h^k$.
 Equation \eqref{eq:FD:newtEq} can than be
written as
\begin{equation}
 \label{eq:FD:LS}
\left(\begin{array}{cc|cc}
  A &B^t & U                   & 0   \\
  B & 0  & 0                   & 0   \\\hline
  0 & 0  & M_1								 & -\sigma \epsilon K_1-\sigma\epsilon^{-1}\Lambda\\  
  \tau T & 0  & \tau bK_1					 & M_1
  \end{array}\right)
  \begin{pmatrix}
  \boldsymbol{\delta v}\\
  \boldsymbol{\delta p}\\
  \boldsymbol{\delta \mu}\\
  \boldsymbol{\delta \varphi}
  \end{pmatrix}=
  \begin{pmatrix}
  F_1\\
  F_2\\
  -F_4\\
  F_3
  \end{pmatrix}.
\end{equation}
where $\boldsymbol{\delta v},
\boldsymbol{\delta p}, 
\boldsymbol{\delta \mu},  
\boldsymbol{\delta \varphi}$
denote the node vectors for $\delta v, \delta p, \delta \varphi, \delta \mu$.  Note that here we
already did some reformulation in order to maintain our later analysis.
In the following, we denote the coefficient matrix in \eqref{eq:FD:LS} by $\mathcal{A}$.
We can further write $\mathcal{A}$ as
\begin{equation}
\label{eq:FD:LS_short}
	\mathcal{A}=\left(\begin{array}{c|c}
\boldsymbol{A}_{\textup{NS}} & \tilde{\boldsymbol{C}}_{I}\\\hline
\tilde{\boldsymbol{C}}_{T} & \tilde{\boldsymbol{A}}_{\textup{CH}}
\end{array}\right),
\end{equation}
where the blocks $\boldsymbol{A}_{\textup{NS}}$ and $\tilde{\boldsymbol{A}}_{\textup{CH}}$ are the 
discrete realizations of the  Navier--Stokes and linearized Cahn--Hilliard system, respectively. 
Their coupling is represented by $\tilde{\boldsymbol{C}}_{I}$, the coupling through the interfacial 
force, and $\tilde{\boldsymbol{C}}_{T}$, the coupling through the transport at the interface. 
The matrix $A$ is invertible by Lax-Milgram's theorem (note the antisymmetric transport term), and 
the $(1,1)$ block $\boldsymbol{A}_{\textup{NS}}$  
then is invertible since we use LBB-stable elements. 
The $(2,2)$ block $\tilde{\boldsymbol{A}}_{\textup{CH}}$ is invertible; see, e.g., \cite{HintermuellerHinzeTber}. 
The Schur complement of the whole system, 
$\boldsymbol{S}=\tilde{\boldsymbol{A}}_{\textup{CH}}-\tilde{\boldsymbol{C}}_{T}\boldsymbol{A}_{\textup{NS}}^{-1}\tilde{\boldsymbol{C}}_{I}$, 
describes a Cahn--Hilliard system with additional transport. 
$\boldsymbol{S}$ is at least invertible for small time steps; see, e.g., \cite{Kahle14}. 


\section{Preconditioning}
\label{sec:pre}
As we have seen in the previous section, a large and sparse linear nonsymmetric system is at the heart of the computation. 
In \cite{GarckeHinzeKahle_CHNS_AGG_linearStableTimeDisc}, the system $(\ref{eq:FD:LS})$ is solved by preconditioned GMRES \cite{SaaS86}
 with a restart after 10 iterations. The authors use the block diagonal preconditioner
\begin{equation}
\label{eq:FD:LS:P_GHK}
\mathcal{P}=\left(\begin{array}{cc} \boldsymbol{\hat{A}}_{\textup{NS}} & \boldsymbol{0}\\ \boldsymbol{0} & \tilde{\boldsymbol{A}}_{\textup{CH}}\end{array}\right).
\end{equation}
The (2,2) block $\tilde{\boldsymbol{A}}_{\textup{CH}}$ is inverted by LU decomposition. The (1,1) block $\boldsymbol{\hat{A}}_{\textup{NS}}$ is an upper block triangular preconditioner of the form
\begin{equation}
\label{chap3:CHNS:NS_prec}
\boldsymbol{\hat{A}}_{\textup{NS}}=\left(\begin{array}{cc} \hat{A} & B^{t}\\ 0 & \hat{S}_{\textup{NS}}\end{array}\right).
\end{equation}
$\hat{A}$ is composed of the diagonal blocks of $A$ and is inverted by LU decomposition. $\hat{S}_{\textup{NS}}$ is an approximation of the exact Schur complement $S_{\textup{NS}}=-BA^{-1}B^{t}$ of the Navier--Stokes system. Garcke et al.~\cite{GarckeHinzeKahle_CHNS_AGG_linearStableTimeDisc} use
\begin{equation}
\label{chap3:CHNS:NS_Schur_prec}
	\hat{S}_{\textup{NS}}=-K_{p}A_{p}^{-1}M_{p},
\end{equation}
where 
$M_{p}=M_1$ is the pressure mass matrix, 
$K_{p}=K_1$ the pressure Laplacian matrix, and 
$A_{p}$ is the representation of $A$ on the pressure space, i.e., 
$A_{p} := M_{2,p} + T_{a,p} + K_{2,p}$, where
\begin{align*}
  M_{2,p} = (m^2_{ij})_{i,j=1}^{N_1},&\quad m^2_{ij}:=
  \left(\frac{\rho^{k-1}+\rho^{k-2}}{2\tau}b_1^j,b_1^i\right),\\
  T_{a,p} = (t^a_{ij})_{i,j=1}^{N_1},& \quad 
  t^a_{ij} := a(\rho^{k-1}v^{k-1} + J^{k-1},b_1^j,b_1^i),\\
  K_{2,p} = (k^2_{ij})_{i,j=1}^{N_1},&\quad k^2_{ij}:=
  (2\eta^{k-1}Db_1^j,Db_1^i).
\end{align*}
This Schur complement approximation was proposed, e.g., in~\cite{KayLW02,ElmSW05}, where it was shown to be independent of 
the mesh size and only mildly dependent on the Reynolds number.\\

Our aim here is to further improve the performance of the preconditioner as a tailored iterative method for the system 
$(\ref{eq:FD:LS})$ can reduce the computing time dramatically. In order to achieve this, we first derive a general strategy for the 
coupled system and then show that for this to work well both the Navier-Stokes part of the discretized equation as well as the Cahn-Hilliard part need to
be approximated by sophisticated strategies. 
This is based on the preconditioning techniques that have been developed in \cite{BosSB14} for 
the nonsmooth Cahn--Hilliard system together with the methods that have been developed 
in~\cite{KayLW02,ElmSW05} for the Navier--Stokes equations and where partly previously used in \cite{GarckeHinzeKahle_CHNS_AGG_linearStableTimeDisc}. 
Let us restate the linear system \eqref{eq:FD:LS} for convenience here
\begin{equation}
 \label{eq:FD:LS2}
\left(\begin{array}{cc|cc}
  A &B^t & U                   & 0   \\
  B & 0  & 0                   & 0   \\\hline
  0 & 0  & M_1								 & -\sigma \epsilon K_1-\sigma\epsilon^{-1}\Lambda\\  
  \tau T & 0  & \tau bK_1					 & M_1
  \end{array}\right)
  \begin{pmatrix}
  \boldsymbol{\delta v}\\
  \boldsymbol{\delta p}\\
  \boldsymbol{\delta \mu}\\
  \boldsymbol{\delta \varphi}
  \end{pmatrix}=
  \begin{pmatrix}
  F_1\\
  F_2\\
  -F_4\\
  F_3
  \end{pmatrix},
\end{equation}
and we recall that we denote the coefficient matrix in \eqref{eq:FD:LS2} by $\mathcal{A}$ that we
can partition as
\begin{equation}
\label{eq:FD:LS2_short}
	\mathcal{A}=\left(\begin{array}{c|c}
\boldsymbol{A}_{\textup{NS}} & {\boldsymbol{C}}_{I}\\\hline
{\boldsymbol{C}}_{T} & \boldsymbol{A}_{\textup{CH}}
\end{array}\right).
\end{equation}
Our basis for preconditioning is the upper block triangular preconditioner
\begin{displaymath}
\mathcal{P}=\left(\begin{array}{cc} \boldsymbol{A}_{\textup{NS}} & \boldsymbol{C}_{I}\\ \boldsymbol{0} & \boldsymbol{S}\end{array}\right),
\end{displaymath}
motivated in \cite{ElmSW05,MurGW00}, where $\boldsymbol{S}=\boldsymbol{A}_{\textup{CH}}-\boldsymbol{C}_{T}\boldsymbol{A}_{\textup{NS}}^{-1}\boldsymbol{C}_{I}$ is the Schur complement of the whole system 
as introduced above after Equation (\ref{eq:FD:LS_short}). 
The right-preconditioned matrix becomes
\begin{displaymath}
 \mathcal{A}\mathcal{P}^{-1}=\left(\begin{array}{c|c}
\boldsymbol{I} & \boldsymbol{0}\\\hline
\boldsymbol{C}_{T}\boldsymbol{A}_{\textup{NS}}^{-1} & \boldsymbol{I}
\end{array}\right),
\end{displaymath}
where $\boldsymbol{I}$ is the identity matrix. Hence, $\mathcal{A}\mathcal{P}^{-1}$ has only a 
single eigenvalue of $1$, and $\mathcal{P}$ is called a theoretical optimal preconditioner.\footnote{Note, that 
the left-preconditioned system $\mathcal{P}^{-1}\mathcal{A}$ has the same spectrum as the right-preconditioned 
system $\mathcal{A}\mathcal{P}^{-1}$.} This preconditioner needs the application of the inverse of $\boldsymbol{A}_{\textup{NS}}$ 
and of $\boldsymbol{S}$, which cannot be explicitly used. Hence, our aim is the development of practical 
approximations $\boldsymbol{\hat{A}}_{\textup{NS}}\approx\boldsymbol{A}_{\textup{NS}}$ and 
$\boldsymbol{\hat{S}}\approx\boldsymbol{S}$, which leads to our practical preconditioner
\begin{equation}
\label{eq:FD:LS:P}
\mathcal{P}_{\textup{out}}=\left(\begin{array}{cc} \boldsymbol{\hat{A}}_{\textup{NS}} & \boldsymbol{C}_{I}\\ \boldsymbol{0} & \boldsymbol{\hat{S}}\end{array}\right).
\end{equation}

Note that the index of $\mathcal{P}_{\textup{out}}$ marks an outer preconditioner. 
Below in Section \ref{ssec:pre_Schur}, we will introduce an inner preconditioner for the solution of linear systems 
with $\boldsymbol{\hat{S}}$. 
For good approximations $\boldsymbol{\hat{A}}_{\textup{NS}}\approx\boldsymbol{A}_{\textup{NS}}$ and 
$\boldsymbol{\hat{S}}\approx\boldsymbol{S}$, the preconditioned matrix $\mathcal{A}\mathcal{P}_{\textup{out}}^{-1}$ has 
only a small number of different eigenvalue clusters. This in turn is known to result in only a 
few iterations of suitable Krylov subspace solvers until convergence \cite{ElmSW05,MurGW00}.

\subsection{Approximation of $\boldsymbol{A}_{\textup{NS}}$}
\label{ssec:pre_NS}
Similar to Garcke et al.~\cite{GarckeHinzeKahle_CHNS_AGG_linearStableTimeDisc} in (\ref{chap3:CHNS:NS_prec}), we choose $\boldsymbol{\hat{A}}_{\textup{NS}}$ as 
\begin{equation}
\label{eq:FD:LS:P_NS}
\boldsymbol{\hat{A}}_{\textup{NS}}=\left(\begin{array}{cc} \hat{A} & B^{t}\\ 0 & -\hat{S}_{\textup{NS}}\end{array}\right).
\end{equation}
As above, $\hat{A}$ is composed of the diagonal blocks of $A$. 
We use an algebraic multigrid (AMG) preconditioner\footnote{AMG methods typically exhibit geometric-like properties for positive definite elliptic type operators but use only algebraic information. This has the advantage that AMG can work well even for complicated geometries and meshes. We refer to \cite{RugS87,Falgout06} for more information on AMG. We also want to emphasize that geometric multigrid (see, e.g., \cite{Wesseling92,Hackbusch85}) approximations are also well suited to approximate $\hat{A}$ provided they can be readily applied.} for the approximation of the inverse of $\hat{A}$. 
As shown in \cite{Ramage99}, 
multigrid is a good preconditioner for convection-diffusion problems if the Reynolds number 
is not too large. 
$\hat{S}_{\textup{NS}}$ is given in (\ref{chap3:CHNS:NS_Schur_prec}). 
The action of the inverse of $M_{p}$ and $K_{p}$ are performed with an AMG each. 
Note that the conjugate gradient method with Jacobi preconditioning provides a good approximation to the 
inverse of $M_{p}$ as well, see e.g.~\cite{KayLW02}. 
$K_{p}$ is a discrete Laplacian for which multigrid provides a good approximation to the inverse; see \cite{Wesseling92}.\\

\subsection{Approximation of $\boldsymbol{S}$}
\label{ssec:pre_Schur}
Now, let us consider the Schur complement $\boldsymbol{S}=\boldsymbol{A}_{\textup{CH}}-\boldsymbol{C}_{T}\boldsymbol{A}_{\textup{NS}}^{-1}\boldsymbol{C}_{I}$ of the whole system. 
Using the preconditioner $\boldsymbol{\hat{A}}_{\textup{NS}}$ from the previous section, we approximate $\boldsymbol{S}$ as
\begin{displaymath}
\boldsymbol{S}\approx\boldsymbol{A}_{\textup{CH}}-\boldsymbol{C}_{T}\boldsymbol{\hat{A}}_{\textup{NS}}^{-1}\boldsymbol{C}_{I}
=\left(\begin{array}{cc}
M_{1} & -\sigma\epsilon K_{1}-\sigma\epsilon^{-1}\Lambda\\
 \tau b K_{1} -\tau T\hat{A}^{-1}U & M_{1}
\end{array}\right)=:\boldsymbol{\hat{S}}.
\end{displaymath}
We propose to apply a preconditioned GMRES iteration to the system of the form $\boldsymbol{\hat{S}}\boldsymbol{y}=\boldsymbol{f}$. 
We call this iteration the inner iteration. 
For the construction of the preconditioner $\mathcal{P}_{\textup{in}}$ for the inner iteration, we make use of the following formulation:
\begin{displaymath}
\boldsymbol{\hat{S}}
=\left(\begin{array}{cc}
M_{1} & -\sigma\epsilon K_{1}-\sigma\epsilon^{-1}\Lambda\\
 \tau b K_{1} & M_{1}
\end{array}\right)-\left(\begin{array}{cc}
0 & 0\\
\tau T\hat{A}^{-1}U & 0
\end{array}\right)=\boldsymbol{A}_{\textup{CH}}-\left(\begin{array}{cc}
0 & 0\\
\tau T\hat{A}^{-1}U & 0
\end{array}\right)\approx \boldsymbol{A}_{\textup{CH}}.
\end{displaymath}

Hence, we build the preconditioner $\mathcal{P}_{\textup{in}}$ for the inner iteration on the basis of the simplification 
$\boldsymbol{A}_{\textup{CH}}$ of $\boldsymbol{\hat{S}}$.\\

In the following, 
we study the importance of taking the whole block $\boldsymbol{A}_{\textup{CH}}$ 
as preconditioner $\mathcal{P}_{\textup{in}}$. Remember from Equation (\ref{eq:FD:LS2})
\begin{displaymath}
  \boldsymbol{A}_{\textup{CH}}=\left(\begin{array}{cc}
   M_1 & -\sigma \epsilon K_1-\sigma\epsilon^{-1}\Lambda\\  
   \tau bK_1 & M_1
  \end{array}\right).
\end{displaymath}
Now, if we would further simplify  $\boldsymbol{A}_{\textup{CH}}$ to
\begin{displaymath}
\boldsymbol{\mathring{A}}_{\textup{CH}}
=\left(\begin{array}{cc}
M_{1} & -\sigma\epsilon K_{1}\\
 \tau b K_{1} & M_{1}
\end{array}\right),
\end{displaymath}
we could easily derive an optimal preconditioner for $\boldsymbol{\mathring{A}}_{\textup{CH}}$; 
see, e.g.,\cite{AxeN11,BoyDN11}. 
However, Theorem~\ref{thm:spectrum} and Corollary~\ref{cor:spectrum} below 
state the severe influence of the penalty parameter $s$ contained in $\Lambda$; 
see also \cite{Bosch16}. 
Hence, a simplification to $\boldsymbol{\mathring{A}}_{\textup{CH}}$ would give a worse 
approximation for large values of $s$. This justifies our approach of taking 
$\boldsymbol{A}_{\textup{CH}}$ 
as preconditioner $\mathcal{P}_{\textup{in}}$.

For the following theorem, we make use of the symmetric positive definiteness 
of $M_{1}$ as well as of the symmetric positive 
semidefiniteness of $K_{1}$ and $\Lambda$.
\begin{theorem}
\label{thm:spectrum}
With the simplified notation $M:=M_1$, $K:=\tau b K_1$, 
$\alpha=\frac{\sigma\varepsilon}{\tau b}$, $\beta=\frac{\sigma}{\varepsilon}$, let
\begin{displaymath}
\boldsymbol{X}
=\left(\begin{array}{cc}
M & -\alpha K\\
 K & M
\end{array}\right),\quad
   \boldsymbol{Y}=\left(\begin{array}{cc}
   M & -\alpha K-\beta\Lambda\\  
   K & M
  \end{array}\right).
\end{displaymath}
Note that $\boldsymbol{X}=\boldsymbol{\mathring{A}}_{\textup{CH}}$ and 
$\boldsymbol{Y}=\boldsymbol{A}_{\textup{CH}}$. 
Then, 
\begin{displaymath}
 \text{sp}(\boldsymbol{X}^{-1}\boldsymbol{Y})\subseteq B_{\varsigma}(1),
\end{displaymath}
where $\text{sp}(\cdot)$ denotes the spectrum of a matrix, and 
$B_{\varsigma}(1)$ is a circle in the complex plane around one with radius $\varsigma$. 
The radius is bounded by $\frac{\beta}{2\sqrt{\alpha}}\,\rho(\tilde{\Lambda})$, 
where $\tilde{\Lambda}=M^{-\frac{1}{2}}\Lambda M^{-\frac{1}{2}}$. 
\end{theorem}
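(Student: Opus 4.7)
The plan is to exploit the fact that $\boldsymbol{Y}$ differs from $\boldsymbol{X}$ only in its $(1,2)$ block, and then use a similarity transformation to reduce everything to the normalized matrices $\tilde{K} = M^{-1/2}K M^{-1/2}$ and $\tilde{\Lambda} = M^{-1/2}\Lambda M^{-1/2}$.

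First I would write $\boldsymbol{X}^{-1}\boldsymbol{Y} = \boldsymbol{I} + \boldsymbol{X}^{-1}\boldsymbol{E}$, where $\boldsymbol{E} := \boldsymbol{Y} - \boldsymbol{X}$ has the single nonzero block $-\beta\Lambda$ in position $(1,2)$. This immediately reduces the task to showing $\rho(\boldsymbol{X}^{-1}\boldsymbol{E}) \le \tfrac{\beta}{2\sqrt{\alpha}}\rho(\tilde\Lambda)$, since every eigenvalue of $\boldsymbol{X}^{-1}\boldsymbol{Y}$ lies in a disk around $1$ of that radius.

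Next I would conjugate by $\boldsymbol{D} = \mathrm{diag}(M^{1/2},M^{1/2})$. Since spectra are similarity-invariant, $\boldsymbol{X}^{-1}\boldsymbol{E}$ has the same spectrum as $\tilde{\boldsymbol{X}}^{-1}\tilde{\boldsymbol{E}}$, where
\begin{displaymath}
\tilde{\boldsymbol{X}} = \begin{pmatrix} I & -\alpha\tilde K \\ \tilde K & I \end{pmatrix},\qquad
\tilde{\boldsymbol{E}} = \begin{pmatrix} 0 & -\beta\tilde\Lambda \\ 0 & 0 \end{pmatrix}.
\end{displaymath}
The crucial observation is that $\tilde K$ is symmetric, so it commutes with $R := (I+\alpha\tilde K^{2})^{-1}$, the Schur complement inverse of $\tilde{\boldsymbol{X}}$. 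A direct block-inverse computation, simplified using this commutativity, gives
\begin{displaymath}
\tilde{\boldsymbol{X}}^{-1} = \begin{pmatrix} R & \alpha R\tilde K \\ -R\tilde K & R \end{pmatrix},
\qquad
\tilde{\boldsymbol{X}}^{-1}\tilde{\boldsymbol{E}} = \begin{pmatrix} 0 & -\beta R\tilde\Lambda \\ 0 & \beta R\tilde K\tilde\Lambda \end{pmatrix}.
\end{displaymath}

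The payoff is that this last product is block upper triangular with a zero $(1,1)$ block, so its spectrum consists of zeros together with the eigenvalues of the $(2,2)$ block $\beta R\tilde K\tilde\Lambda$. I would then estimate
\begin{displaymath}
\rho(\beta R\tilde K\tilde\Lambda) \;\le\; \beta\,\|R\tilde K\|_{2}\,\|\tilde\Lambda\|_{2}
= \beta\,\|R\tilde K\|_{2}\,\rho(\tilde\Lambda),
\end{displaymath}
using symmetry of $\tilde\Lambda$. Since $\tilde K$ is symmetric positive semidefinite, $R\tilde K$ is a polynomial in $\tilde K$ and $\|R\tilde K\|_{2} = \max_{\sigma\ge 0} \sigma/(1+\alpha\sigma^{2})$, whose maximum at $\sigma = 1/\sqrt{\alpha}$ equals $1/(2\sqrt{\alpha})$. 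Combining these gives the stated bound. The only step requiring care is the block-inverse computation and the observation that the resulting product has vanishing first block column — this reduction to a scalar-type optimization is what makes the bound depend on $M$ and $\Lambda$ through $\tilde\Lambda$ rather than on $K$, and is the main technical content of the argument.
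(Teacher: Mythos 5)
Your argument is correct and is essentially the paper's proof: both reduce the claim to bounding $\rho\bigl(\boldsymbol{X}^{-1}(\boldsymbol{Y}-\boldsymbol{X})\bigr)$, block-invert $\boldsymbol{X}$, identify the nonzero eigenvalues of that product with those of $\beta\, r(\tilde{K})\tilde{\Lambda}$ for $r(x)=x/(1+\alpha x^{2})$, and then use $\rho(AB)\le\|A\|\,\|B\|$ together with $\max_{x\ge 0} r(x)=1/(2\sqrt{\alpha})$. The only difference is organizational: you symmetrize at the outset by conjugating with $\mathrm{diag}(M^{1/2},M^{1/2})$, whereas the paper works with $C=KM^{-1}$ and $W=I+\alpha C^{2}$ and performs the same $M^{\pm 1/2}$ similarity transformation at the end.
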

\begin{proof}
It holds $\text{sp}(\boldsymbol{X}^{-1}\boldsymbol{Y})\subseteq B_{\varsigma}(1)$ if and only if 
$\text{sp}(\boldsymbol{I}-\boldsymbol{X}^{-1}\boldsymbol{Y})\subseteq B_{\varsigma}(0)$ 
if and only if 
$\rho(\boldsymbol{X}^{-1}(\boldsymbol{X}-\boldsymbol{Y}))<\varsigma$. Here, $\rho(\cdot)$ 
denotes the spectral radius of a matrix, and $\boldsymbol{I}$ is the 
identity matrix of appropriate size. 

Let us start with finding an equivalent formulation for 
$\rho(\boldsymbol{X}^{-1}(\boldsymbol{X}-\boldsymbol{Y}))$.
With the notation $C=KM^{-1}$ and  
$W=I+\alpha C^{2}$, where $I$ is again the 
identity matrix of appropriate size, we have
\begin{align*}
 \boldsymbol{X}&=\left(\begin{array}{cc}I & -\alpha C\\C & I\end{array}\right)
 \left(\begin{array}{cc}M & 0\\0 & M\end{array}\right),\\
  \boldsymbol{X}^{-1}&= \left(\begin{array}{cc}M^{-1} & 0\\0 & M^{-1}\end{array}\right)
  \left(\begin{array}{cc}I & -\alpha C\\C & I\end{array}\right)^{-1}.
\end{align*}
Using block matrix inversion, we have
\begin{displaymath}
 \left(\begin{array}{cc}I & -\alpha C\\C & I\end{array}\right)^{-1}=\ 
 \left(\begin{array}{cc}(I+\alpha C^{2})^{-1} & \alpha(I+\alpha C^{2})^{-1}C\\
       -(I+\alpha C^{2})^{-1}C & (I+\alpha C^{2})^{-1}
\end{array}\right)
=\left(\begin{array}{cc}W^{-1} & \alpha W^{-1}C\\-W^{-1}C & W^{-1}\end{array}\right)
\end{displaymath}
Hence,
\begin{align*}
  \boldsymbol{X}^{-1}(\boldsymbol{X}-\boldsymbol{Y})&=
   \left(\begin{array}{cc}M^{-1} & 0\\0 & M^{-1}\end{array}\right)
   \left(\begin{array}{cc}W^{-1} & \alpha W^{-1}C\\-W^{-1}C & W^{-1}\end{array}\right)
  \left(\begin{array}{cc}0 & \beta\Lambda\\0 & 0\end{array}\right)=
  \beta\,\left(\begin{array}{cc}0 & (WM)^{-1}\Lambda\\0 & -(WM)^{-1}C\Lambda\end{array}\right)
\end{align*}
and therefore
\begin{displaymath}
 \rho\left(\boldsymbol{X}^{-1}(\boldsymbol{X}-\boldsymbol{Y})\right)=\beta\,\rho\left((WM)^{-1}C\Lambda\right).
\end{displaymath}

For a further reformulation we introduce  the rational function $r(x)=\frac{x}{1+\alpha x^{2}}$, and
define $\tilde{C}=M^{-\frac{1}{2}}KM^{-\frac{1}{2}}$ and 
$\tilde{\Lambda}=M^{-\frac{1}{2}}\Lambda M^{-\frac{1}{2}}$, which are 
symmetric positive semidefinite.
Then we have
\begin{align*}
 r(\tilde{C})&=\left(I+\alpha\tilde{C}^{2}\right)^{-1}\tilde{C}=
 \left(I+\alpha M^{-\frac{1}{2}}KM^{-1}KM^{-\frac{1}{2}}\right)^{-1}M^{-\frac{1}{2}}KM^{-\frac{1}{2}}\\
 &=\left[M^{\frac{1}{2}}\left(I+\alpha M^{-\frac{1}{2}}KM^{-1}KM^{-\frac{1}{2}}\right)\right]^{-1}KM^{-\frac{1}{2}}
=\left(M^{\frac{1}{2}}+\alpha KM^{-1}KM^{-\frac{1}{2}}\right)^{-1}KM^{-\frac{1}{2}}\\
&=\left[\left(I+\alpha KM^{-1}KM^{-1}\right)M^{\frac{1}{2}}\right]^{-1}KM^{-\frac{1}{2}}
=M^{-\frac{1}{2}}\left(I+\alpha C^{2}\right)^{-1}KM^{-\frac{1}{2}}\\
&=M^{-\frac{1}{2}}W^{-1}KM^{-\frac{1}{2}}
 \end{align*}
and hence
\begin{align*}
 (WM)^{-1}C\Lambda &=M^{-1}W^{-1}KM^{-1}\Lambda=M^{-\frac{1}{2}}\left(M^{-\frac{1}{2}}W^{-1}KM^{-\frac{1}{2}}\right)\left(M^{-\frac{1}{2}}\Lambda M^{-\frac{1}{2}}\right)M^{\frac{1}{2}}\\
 &=M^{-\frac{1}{2}}r(\tilde{C})\tilde{\Lambda}M^{\frac{1}{2}}.
\end{align*}
It follows
\begin{align}
 \rho\left((WM)^{-1}C\Lambda\right)&=\rho\left(r(\tilde{C})\tilde{\Lambda}\right)\leq\|r(\tilde{C})\tilde{\Lambda}\|\leq\|r(\tilde{C})\|\;\|\tilde{\Lambda}\|=\rho\left(r(\tilde{C})\right)\,\rho\left(\tilde{\Lambda}\right)
 \leq \max_{\lambda\in\text{sp}(\tilde{C})}\left\{r(\lambda)\right\}\,\rho\left(\tilde{\Lambda}\right)\notag\\
 &\leq \max_{x}\left\{r(x)\right\}\,\rho\left(\tilde{\Lambda}\right)=\frac{1}{2\sqrt{\alpha}}\,\rho\left(\tilde{\Lambda}\right),\label{eq1:spectrum}
\end{align}
where we have taken into account that the eigenvalues of 
$r(Z)$ for a matrix $Z$ with eigenvalues $\lambda_1,\ldots,\lambda_q$ 
are given by $r(\lambda_1),\ldots, r(\lambda_q)$.
\end{proof}

Applied to our situation, we have
 \begin{displaymath}
  \text{sp}(\boldsymbol{\mathring{A}}_{\textup{CH}}^{-1}\boldsymbol{A}_{\textup{CH}})\subseteq B_{\varsigma}(1),
 \end{displaymath}
where $\varsigma\leq\frac{\sqrt{\tau\sigma b}}{2\epsilon\sqrt{\epsilon}}\,\rho(\tilde{\Lambda})$.

\begin{corollary}
\label{cor:spectrum}
As a result from Theorem \ref{thm:spectrum}, 
we get $\varsigma\leq 0.5$ when 
 $\tau\leq\varepsilon^{3}/(s^{2}\sigma b\,\rho(\tilde{\Lambda}_{0})^{2})$, 
 where $\tilde{\Lambda}_{0}=s^{-1}M^{-\frac{1}{2}}\Lambda M^{-\frac{1}{2}}$. 
 In particular, in the case of lumped mass matrices, we have $\rho(\tilde{\Lambda}_{0})=1$ and hence, the circle radius is bounded by $\varsigma\leq\frac{s\sqrt{\tau\sigma b}}{2\epsilon\sqrt{\epsilon}}$. 
 Hence, we get $\varsigma\leq 0.5$ when $\tau\leq\varepsilon^{3}/(s^{2}\sigma b)$.
\end{corollary}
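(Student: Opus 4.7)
The plan is to turn the bound of Theorem~\ref{thm:spectrum} into an explicit inequality on $\tau$ by substituting the specific values of $\alpha$ and $\beta$ that arise for $\boldsymbol{A}_{\textup{CH}}$, then to isolate the dependence on the penalty parameter $s$ by factoring it out of $\tilde{\Lambda}$.

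First, I would substitute $\alpha=\frac{\sigma\varepsilon}{\tau b}$ and $\beta=\frac{\sigma}{\varepsilon}$ into the bound $\varsigma\leq\frac{\beta}{2\sqrt{\alpha}}\rho(\tilde{\Lambda})$ from Theorem~\ref{thm:spectrum}. A short algebraic manipulation gives exactly the displayed inequality $\varsigma\leq\frac{\sqrt{\tau\sigma b}}{2\epsilon\sqrt{\epsilon}}\rho(\tilde{\Lambda})$ stated just before the corollary, so no new work is needed there. Next, using the definition $\tilde{\Lambda}_{0}=s^{-1}M^{-1/2}\Lambda M^{-1/2}$, I would write $\rho(\tilde{\Lambda})=s\,\rho(\tilde{\Lambda}_{0})$ and impose $\varsigma\leq 1/2$. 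Squaring and solving for $\tau$ yields the claimed bound $\tau\leq\varepsilon^{3}/(s^{2}\sigma b\,\rho(\tilde{\Lambda}_{0})^{2})$.

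For the lumped-mass case, the key observation is that $\Lambda_{ij}=(W_+''(\varphi^m)b_1^{j},b_1^{i})$ with $W_+''$ taking only the two values $0$ and $s$, as recorded right after \eqref{eq:FD:DF}. With mass lumping applied consistently to both $M_1$ and $\Lambda$, the matrix $\Lambda$ becomes $s$ times a diagonal $\{0,1\}$-indicator matrix multiplied by the lumped mass diagonal. Consequently $M^{-1/2}\Lambda M^{-1/2}=s\,D$ with $D$ diagonal with entries in $\{0,1\}$, so $\rho(\tilde{\Lambda}_{0})=\rho(D)\in\{0,1\}$, and in nontrivial situations equals $1$. Substituting this back into the previous inequality reduces it to the clean condition $\tau\leq\varepsilon^{3}/(s^{2}\sigma b)$.

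The whole argument is essentially bookkeeping: the nontrivial input is already provided by Theorem~\ref{thm:spectrum}. The only point that needs a sentence of justification is the lumped-mass computation of $\rho(\tilde{\Lambda}_{0})$, which rests on the piecewise-constant structure of $W_+''$. I do not expect any substantial obstacle.
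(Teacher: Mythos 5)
Your proposal is correct and follows essentially the same route as the paper: factor the penalty parameter out of $\tilde{\Lambda}$ via $\tilde{\Lambda}=s\tilde{\Lambda}_{0}$, substitute $\alpha=\sigma\varepsilon/(\tau b)$ and $\beta=\sigma/\varepsilon$ into the radius bound of Theorem~\ref{thm:spectrum}, and solve $\varsigma\leq 1/2$ for $\tau$; the lumped-mass observation that $\tilde{\Lambda}_{0}$ is a diagonal $\{0,1\}$-matrix is likewise the paper's argument. Your remark that $\rho(\tilde{\Lambda}_{0})$ could in principle be $0$ when the active set is empty is a small extra precision not present in the paper, but it does not change the conclusion.
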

\begin{proof}
In order to illustrate the influence of the penalty parameter $\hp$, we reformulate $\tilde{\Lambda}$ as
\begin{displaymath}
 \tilde{\Lambda}=M^{-\frac{1}{2}}\Lambda M^{-\frac{1}{2}}=sM^{-\frac{1}{2}}\Lambda_{0} M^{-\frac{1}{2}}=s\tilde{\Lambda}_{0},
\end{displaymath}
where $\Lambda_{0}=(\lambda^{0}_{ij})_{ij=1}^{N_1}$, 
$\lambda^{0}_{ij} = (W_{+,0}^{\prime\prime}(\varphi^m)b_1^j,b_1^i)$, and
\begin{align*}
  W_{+,0}^{\prime\prime}(\varphi_h^k(x)) :=
  \begin{cases}
  		1 & \mbox{if } |\varphi_h^k(x)| > 1,\\
  		0 & \mbox{else}
  \end{cases}
  \quad
  \forall x \in \Omega.
\end{align*}
Hence, we have in (\ref{eq1:spectrum})
\begin{equation}
 \label{eq:FD:LS:S_GEP:10}
 \rho\left((WM)^{-1}C\Lambda\right)\leq\frac{s}{2\sqrt{\alpha}}\,\rho(\tilde{\Lambda}_{0})=\frac{s\sqrt{\tau\sigma b}}{2\epsilon\sqrt{\epsilon}}\,\rho(\tilde{\Lambda}_{0}),
\end{equation}
where $\rho(\tilde{\Lambda}_{0})$ depends on the spatial mesh size and $\varphi_h^k$. 
Therefore, for $\tau\leq\varepsilon^{3}/(s^{2}\sigma b\rho(\tilde{\Lambda}_{0})^{2})$, it holds 
$\text{sp}(\boldsymbol{\mathring{A}}_{\textup{CH}}^{-1}\boldsymbol{A}_{\textup{CH}})\subseteq B_{0.5}(1)$.\\

In the case of lumped mass matrices, $\tilde{\Lambda}_{0}$ becomes a diagonal matrix 
with entries that are either zero or one. Thus, we obtain in (\ref{eq:FD:LS:S_GEP:10})
\begin{displaymath}
\rho\left((WM)^{-1}C\Lambda\right)\leq\frac{s}{2\sqrt{\alpha}}=\frac{s\sqrt{\tau\sigma b}}{2\epsilon\sqrt{\epsilon}}.
\end{displaymath}
Therefore, for $\tau\leq \varepsilon^{3}/(s^{2}\sigma b)$, it holds 
$\text{sp}(\boldsymbol{\mathring{A}}_{\textup{CH}}^{-1}\boldsymbol{A}_{\textup{CH}})\subseteq B_{0.5}(1)$.\\
\end{proof}

Hence, neglecting the block $\Lambda$ in $\boldsymbol{A}_{\textup{CH}}$ would only be satisfying for tiny time step sizes $\tau$, which is far away from being practical.\\

As preconditioner for the inner iteration, we propose the upper block triangular preconditioner
\begin{equation}
\label{eq:FD:LS:P_CH}
\mathcal{P}_{\textup{in}}=\left[\begin{array}{cc} M_{1} & -\sigma\varepsilon K_{1}-\sigma\varepsilon^{-1}\Lambda\\ 0 & -\hat{S}_{CH}\end{array}\right].
\end{equation}
$\hat{S}_{CH}$ is an approximation of the exact Schur complement
\begin{displaymath}
S_{CH}=M_{1}+\tau\left(b K_{1} -T\hat{A}^{-1}U\right)M_{1}^{-1}\left(\sigma\varepsilon K_{1}+\sigma\varepsilon^{-1}\Lambda\right)	
\end{displaymath}
of $\boldsymbol{\hat{S}}$. 
Our procedure for the approximation of the Schur complement $S_{CH}$ originates in the work of Pearson and 
Wathen~\cite{PeaW12}, who developed preconditioners for PDE-constrained optimization. 
Their matching strategy, applied to our problem, is the following: Construct a preconditioner of the form 
$\hat{S}_{CH}=S_{1}M_{1}^{-1}S_{2}$, which captures the exact Schur complement $S_{CH}$ as close as possible. 
Note that we need $\hat{S}_{CH}$ to be nonsingular. We design $\hat{S}_{CH}$ as
\begin{align}
	\hat{S}_{CH}&=S_{1}M_{1}^{-1}S_{2}\notag\\
	&=\left(M_{1}+\sqrt{\tau\sigma b}K_{1}\right)M_{1}^{-1}\left(M_{1}+\sqrt{\tau b\sigma^{-1}}\left[\sigma\varepsilon K_{1}+\sigma\varepsilon^{-1}\Lambda\right]\right)\notag\\	
	&=M_{1}+\tau b K_{1}M_{1}^{-1}\left(\sigma\varepsilon K_{1}+\sigma\varepsilon^{-1}\Lambda\right)+\sqrt{\tau b\sigma}K_{1}+\sqrt{\tau b\sigma}\left(\varepsilon K_{1}+\varepsilon^{-1}\Lambda\right).\label{eq:FD:LS:P_CH_Schur}
\end{align}

The first term in (\ref{eq:FD:LS:P_CH_Schur}) matches the first term in the exact Schur complement. The second term in (\ref{eq:FD:LS:P_CH_Schur}) approximates the second term in the exact Schur complement. 
Due to the factor $\sqrt{\tau b\sigma}$, the influence of both remainder terms in (\ref{eq:FD:LS:P_CH_Schur}) is reduced. 
We refer the reader to \cite{BosSB14,BosKSW14,BosS15a,Bosch16} for Schur complement approximations to other Cahn--Hilliard problems. 
The action of the inverse of $S_{1}$ and $S_{2}$ is performed with an AMG each since both form 
the discretization of an elliptic operator. 
We also apply AMG for the action of the inverse of the $(1,1)$ block $M_{1}$ in $\mathcal{P}_{\textup{in}}$.
\footnote{For consistent mass matrices, the Chebyshev-iteration \cite{GolV61a,GolV61b} provides a powerful preconditioner \cite{WatR09,ReeS10}.}

Here, we finish the theoretical discussion about the preconditioner. 
In the next section, we illustrate its efficiency via various numerical experiments.


\section{Numerical examples}
\label{sec:num}
In this section, we show numerical results for the presented coupled Cahn--Hilliard Navier--Stokes problem. 
First, we explain our implementation framework.\\

Garcke et al.~\cite{GarckeHinzeKahle_CHNS_AGG_linearStableTimeDisc} have implemented the whole numerical 
simulation in C++. We use their code, but the iterative solution of the linear
system is executed in \textsc{MATLAB}\textsuperscript{\textregistered} R2012a on a 32-bit server with CPU type 
Intel\textsuperscript{\textregistered} Core\textsuperscript{\texttrademark} E6850 @3.00 GHz with 2 CPUs. 
We use the MATLAB Engine API in order to call \textsc{MATLAB}\textsuperscript{\textregistered} 
from C++.\\

As we use GMRES as the inner iteration, we apply FGMRES as the outer iteration. 
FGMRES is a variant of GMRES and was introduced by Saad~\cite{Saad93}. 
This method allows changes in the preconditioner at every step. 
Note again that we apply right preconditioning here. 
We use FGMRES (outer solver) with a restart after $30$ iterations and set the initial guess to the zero vector. 
As stopping criterion, we use
\begin{displaymath}
 \|\boldsymbol{b}-\mathcal{A}\boldsymbol{z}^{(l)}\|\leq\min(10^{-6}\,\|\boldsymbol{b}\|,10^{-6}),
\end{displaymath}
where $\boldsymbol{b}$ 
denotes the corresponding right-hand side and $\boldsymbol{z}^{(l)}$ the calculated solution at FGMRES step $l$. 
We use GMRES (inner solver) without restart and set the initial guess to the zero vector, the tolerance for the preconditioned relative residual to $10^{-1}$, 
and the maximum number of iterations to $50$.\\

For the application of AMG, we employ the HSL (formerly the Harwell Subroutine Library) Mathematical Software Library, 
a collection of Fortran codes for large-scale scientific computation; see \url{http://www.hsl.rl.ac.uk/} and \cite{BoyMS10}. In particular, 
we make use of the \textit{HSL\_MI20} package for the approximation of the inverse of $\hat{A}$ 
(the two diagonal blocks of the 
discrete convection-diffusion operator) and of $K_{p}$ (the pressure Laplacian matrix). 
For $\hat{A}$, we apply \textit{HSL\_MI20} with two coarse levels in the multigrid structure, 
two V-cycles with symmetric Gauss--Seidel smoothing (two pre- and two post-smoothing iterations), and 
10 Gauss-Seidel iterations on the coarsest level. 
For $K_{p}$, we apply \textit{HSL\_MI20} with at most 100 coarse levels in the multigrid structure, 
one V-cycle with symmetric Gauss--Seidel smoothing (two pre- and two post-smoothing iterations), and 
the sparse direct solver \textit{HSL\_MA48} on the coarsest level. 

Moreover, we employ the software package \textit{AGMG} version 3.2.0, a program
which implements AMG described in \cite{Notay10} with further improvements from
\cite{NapN12} and \cite{Notay12}; see also \url{http://homepages.ulb.ac.be/~ynotay/AGMG}.
It introduces K-cycle multigrid meaning that the iterative solution of the residual equation at each level 
is accelerated with a Krylov subspace method. We use the default, which is a preconditioned variant of the generalized 
conjugate residual method (GCR) \cite{EisES83} restarted every 10 iterations. We set the maximum number of iterations to 50. 
The coarsening is stopped when the coarse grid matrix has 200 or less rows, allowing fast direct inversion with 
LAPACK routines \cite{Andal99}. Symmetric Gauss-Seidel smoothing is used with one pre- and one post-smoothing iteration. 
We make use of the \textit{AGMG} MATLAB interface for the approximation of the inverse of $M_{p}$ 
(the pressure mass matrix), 
of $M_{1}$ (the (1,1) block of the inner preconditioner $\mathcal{P}_{\textup{in}}$), as well as of $S_{1}$ and 
$S_{2}$ (the two blocks forming the Schur complement approximation $\hat{S}_{CH}$). 
If not mentioned otherwise, we set the tolerance on the relative residual norm to $10^{-3}$, $10^{-2}$, $10^{-5}$, and $10^{-5}$ 
for $M_{p}$, $M_{1}$, $S_{1}$, and $S_{2}$, respectively.


\subsection{Parameter study: A rising bubble}
In this section, we demonstrate the robustness of our proposed preconditioner regarding relevant model parameters. 
As test example, we use a quantitative benchmark for rising bubble dynamics; 
see the first benchmark test case in \cite{Hysing_Turek_quantitative_benchmark_computations_of_two_dimensional_bubble_dynamics}. 
A simulation is illustrated in Figure~\ref{fig:Benchmark_1}.
\begin{figure}[!htbp]
  \centering
  \begin{tabular}{ccccccc}
    \includegraphics[trim=  7cm 4cm 17cm 5cm, clip,height=3cm]{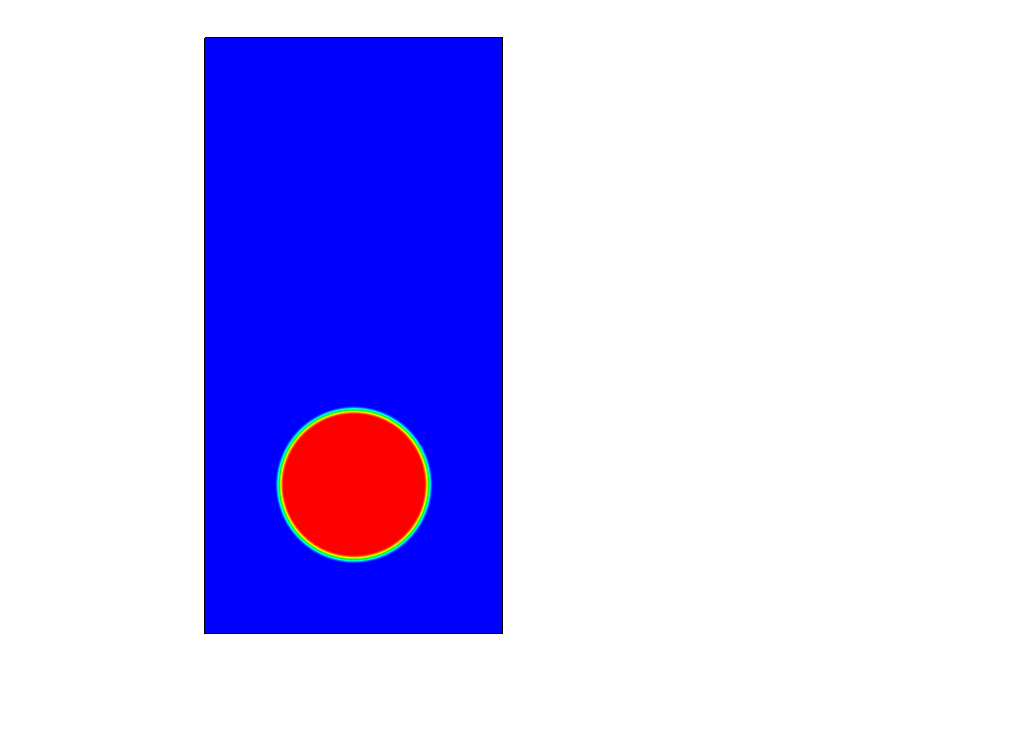} & &
    \includegraphics[trim=  7cm 4cm 17cm 5cm, clip,height=3cm]{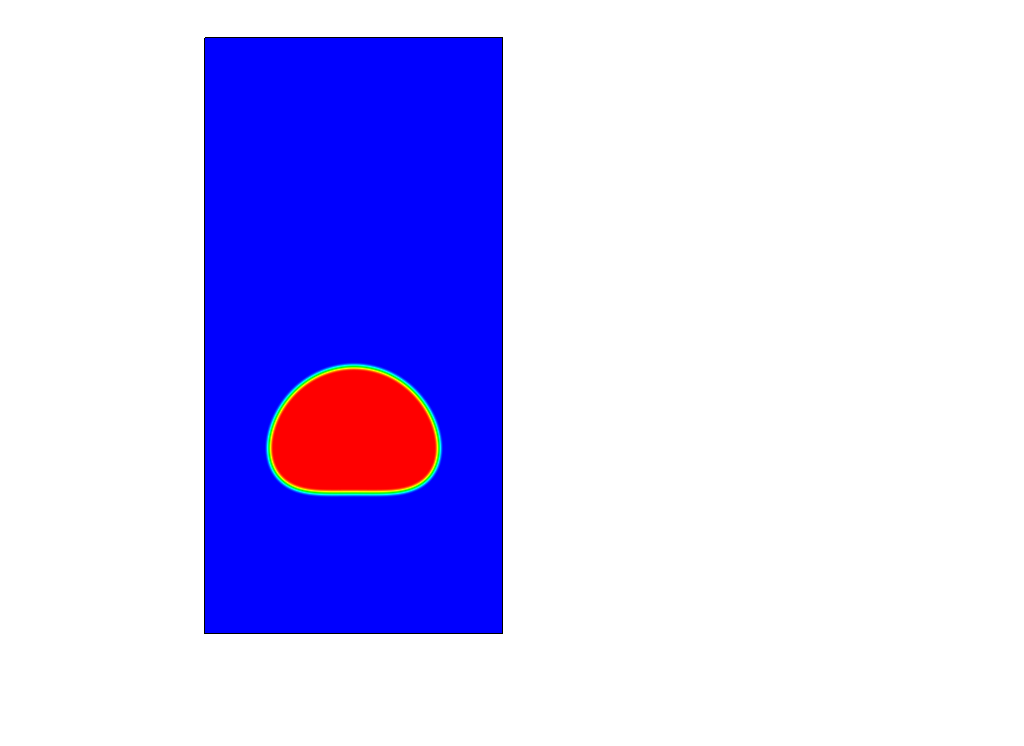} & &
    \includegraphics[trim=  7cm 4cm 17cm 5cm, clip,height=3cm]{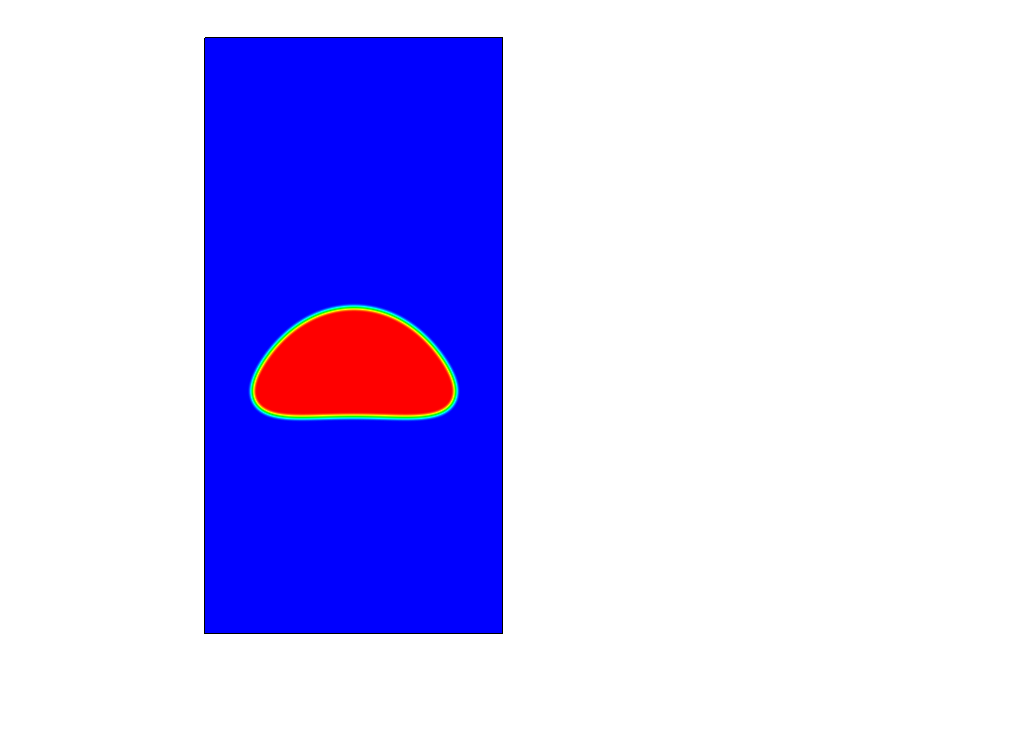} & &
    \includegraphics[trim=  7cm 4cm 17cm 5cm, clip,height=3cm]{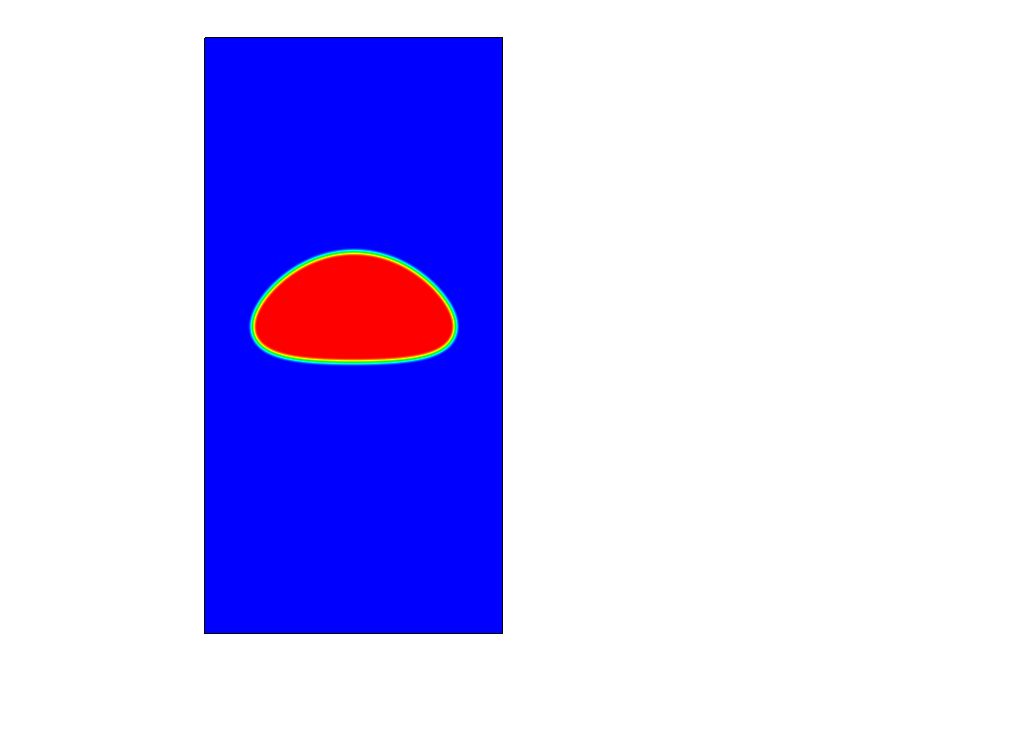}\\
    $t=0$ & & $t=1$ & & $t=2$ & & $t=3$
  \end{tabular}
  \caption{Simulation of a rising bubble using a coupled Cahn--Hilliard Navier--Stokes model.}
  \label{fig:Benchmark_1}
\end{figure}

The initial configuration is described as follows; see also~\cite[p.~168]{GarckeHinzeKahle_CHNS_AGG_linearStableTimeDisc}. 
The spatial domain is $\Omega=(0,1)\times(0,2)$ with no-slip boundary conditions for the velocity field on the top 
and bottom wall and free-slip boundary conditions on the left and right wall. The initial state consists of a bubble of 
radius $r=0.25$ centered at the spatial point $(0.5,0.5)$. The initial velocity is zero. In the following, we denote 
by $N_{1}^{0}$ and $N_{2}^{0}$ the values of $N_{1}$ and $N_{2}$ at time $0$. 
Note that during the simulation, the values of $N_{1}$ and $N_{2}$ stay about their initial sizes, i.e., 
there is no drastic change. 
The fixed parameters in all 
tests are given as $\rho_{2}=100$, $\eta_{1}=10$, $\eta_{2}=1$, $g=(0,-0.98)^{t}$. 
The remaining parameters are given below for each individual experiment. 
Moreover, the Reynolds number is given as
\begin{displaymath}
  Re=\frac{0.35\,\rho_{1}}{\eta_{1}}.
\end{displaymath} 
Figures \ref{fig:parameter} and \ref{fig:parameter_2} demonstrate the robustness with respect to different model parameters. 
Table~\ref{table:parameter_Newton} shows the values of all parameters. 
In Figure \ref{fig:parameter}\subref{subfig:parameter_1}, we simultaneously vary the mesh sizes via refinements of the 
initial spatial mesh $\mathcal{T}^{0}$, the interfacial parameter $\epsilon$, 
the time step size $\tau$, as well as the mobility $b$. In fact, this is the practical procedure: Choose an $\epsilon$ and 
adjust the mesh sizes. The time step size is adapted to fulfill the CFL-condition
\begin{displaymath}
 \max_{T}{\left\{\frac{\tau\,|v^k|_T}{\text{diam}(T)}\right\}}\leq 1.
\end{displaymath}
Moreover, the mobility is chosen to be 
$b=10^{-3}\epsilon$, as used in \cite[p.~168]{GarckeHinzeKahle_CHNS_AGG_linearStableTimeDisc}. 
Except for the initial time frame, we observe a
robust behavior of iteration numbers. For the first time step the initial data for Newton's method,
which is a discrete approximation of $\varphi_0$, seems not to be appropriate and thus resultign a
slow convergence.
 In Figure \ref{fig:parameter}\subref{subfig:parameter_2}, we vary the scaled surface tension $\sigma$. 
Although the iteration numbers behave quite chaotic, they mostly stay in the range between $50$ and $65$. 
In Figure \ref{fig:parameter}\subref{subfig:parameter_3}, we vary the Reynolds number via increasing the density 
$\rho_{1}$. We observe a small increase of iterations numbers as the Reynolds number increases. 
In Figure \ref{fig:parameter_2}\subref{subfig:parameter_1}, we vary the mobility $b$. 
We observe a benign increase of iterations numbers as the mobility decreases. 
In Figure \ref{fig:parameter_2}\subref{subfig:parameter_2}, we vary the penalty parameter $\hp$. 
Comparing the iteration numbers for $\hp=10^{4}$ with the ones for $\hp\in\{10^{6},10^{8},10^{9}\}$, we even obtain better results for the larger penalty parameters. 
Finally, Table~\ref{table:parameter_Newton} illustrates the maximum and average number of semismooth Newton iterations 
for each of the five subplots, respectively.\\
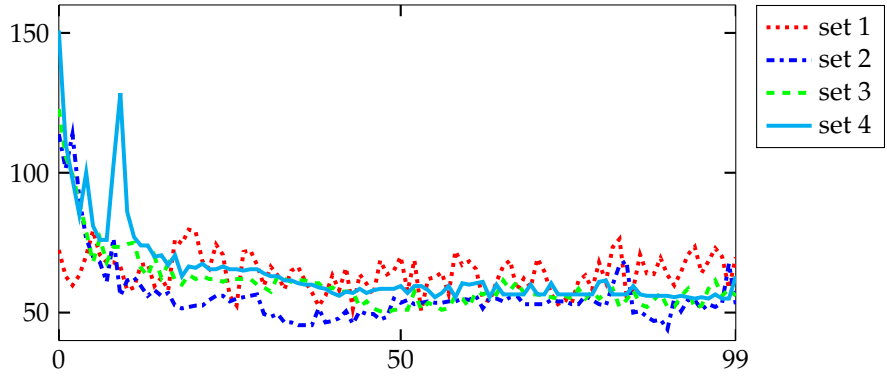
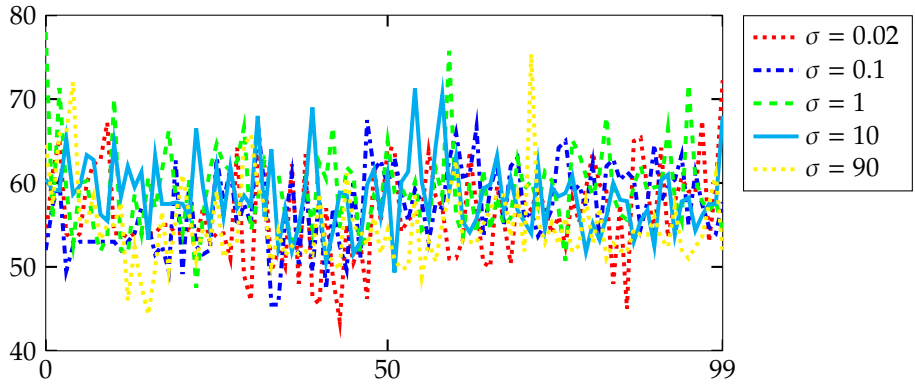
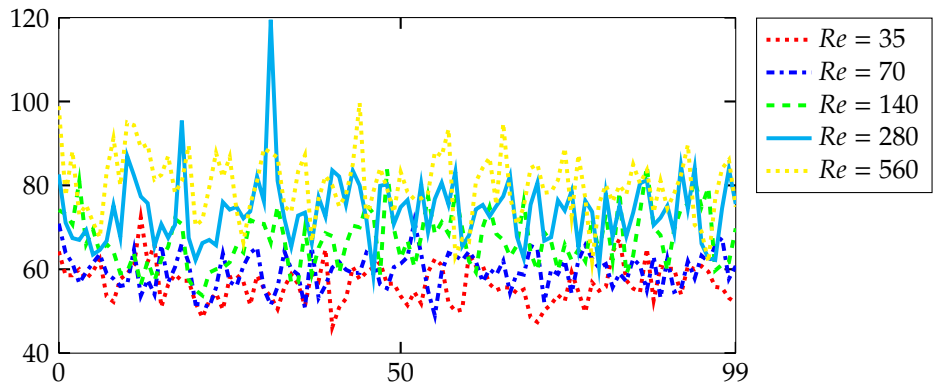
\begin{figure}[!htbp]
	\centering
	\newlength\figureheight 
	\newlength\figurewidth 
	\setlength\figureheight{4.7cm} 
	\setlength\figurewidth{6cm}
	\subfloat[Simultaneous variation of the initial spatial mesh $\mathcal{T}^{0}$, the interfacial parameter $\epsilon$, 
the time step size $\tau$, as well as the mobility $b$.]{\label{subfig:parameter_1}{
%
%
\begin{tikzpicture}

\begin{axis}[%
width=1.5\figurewidth,
height=0.95\figureheight,
at={(0\figurewidth,0\figureheight)},
scale only axis,
separate axis lines,
every outer x axis line/.append style={black},
every x tick label/.append style={font=\color{black}},
xmin=0,
xmax=99,
major tick length=0.15cm,
minor tick length=0.075cm,
tick style={thick,color=black},
xtick={0,50,99},
every outer y axis line/.append style={black},
every y tick label/.append style={font=\color{black}},
ymin=40,
ymax=160,
ylabel style={align=center},
axis background/.style={fill=white},
legend style={legend cell align=left,align=left,draw=black},
legend pos=outer north east
]
\addplot [color=red,line width=1.5pt,dotted]
  table[row sep=crcr]{
0	72.4\\
1	63\\
2	59.75\\
3	63.3333\\
4	70.3333\\
5	79.3333\\
6	70.6667\\
7	68\\
8	68\\
9	66.5\\
10	58\\
11	59\\
12	67\\
13	67.5\\
14	59.5\\
15	62.5\\
16	58\\
17	76.3333\\
18	74.6667\\
19	79.6667\\
20	79\\
21	68.3333\\
22	65.6667\\
23	74.3333\\
24	70.6667\\
25	56\\
26	52.5\\
27	71.6667\\
28	72.3333\\
29	67\\
30	59.6667\\
31	66.4\\
32	63.3333\\
33	58.3333\\
34	65\\
35	66.3333\\
36	61.3333\\
37	57.3333\\
38	52.3333\\
39	56\\
40	63.3333\\
41	58\\
42	66\\
43	51.3333\\
44	63.6667\\
45	60.3333\\
46	56.3333\\
47	68.6667\\
48	63.6667\\
49	65\\
50	70\\
51	57.6667\\
52	68.8\\
53	50.6667\\
54	62\\
55	62.6667\\
56	64.3333\\
57	56.6667\\
58	72\\
59	67\\
60	68.3333\\
61	65.6\\
62	55\\
63	54\\
64	61\\
65	66.6667\\
66	68.6667\\
67	58\\
68	62.2\\
69	59\\
70	69.3333\\
71	64.75\\
72	56.6667\\
73	53\\
74	54.6667\\
75	53.3333\\
76	61.3333\\
77	63.3333\\
78	58.3333\\
79	68\\
80	61\\
81	73.4\\
82	76.3333\\
83	56.3333\\
84	58.4\\
85	70.75\\
86	64\\
87	63.75\\
88	69.5\\
89	63.5\\
90	59.75\\
91	64.6\\
92	70.75\\
93	73.5\\
94	64.2\\
95	68.3333\\
96	73\\
97	70\\
98	58.6667\\
99	69.75\\
};\label{fig:parameter:all:1}
\addlegendentry{set 1};

\addplot [color=blue,line width=1.5pt,dashdotted]
  table[row sep=crcr]{
0	113.8\\
1	101.5\\
2	113.75\\
3	91\\
4	76.5\\
5	70\\
6	68\\
7	61\\
8	76.5\\
9	56\\
10	61.5\\
11	62.5\\
12	59.5\\
13	56\\
14	58\\
15	56\\
16	57\\
17	52.5\\
18	51.5\\
19	52\\
20	52.5\\
21	52.5\\
22	54.5\\
23	56\\
24	56\\
25	54\\
26	55\\
27	55.5\\
28	56\\
29	56.5\\
30	49.5\\
31	48.5\\
32	49.5\\
33	46.5\\
34	46.5\\
35	45.5\\
36	45.5\\
37	45.5\\
38	51.5\\
39	46.5\\
40	47\\
41	48\\
42	50.5\\
43	46\\
44	50.5\\
45	49.5\\
46	49.5\\
47	47.5\\
48	48.5\\
49	55\\
50	53.5\\
51	54.5\\
52	53\\
53	53.5\\
54	54\\
55	53.5\\
56	53.5\\
57	54\\
58	54\\
59	55\\
60	54.5\\
61	56\\
62	51.5\\
63	55\\
64	55\\
65	54\\
66	55.5\\
67	56\\
68	53\\
69	53\\
70	53\\
71	53\\
72	54.5\\
73	54\\
74	54\\
75	53\\
76	53\\
77	56\\
78	54.5\\
79	53\\
80	52.5\\
81	60\\
82	67\\
83	68.5\\
84	50\\
85	50.5\\
86	48.5\\
87	47\\
88	47.5\\
89	44\\
90	55\\
91	50.5\\
92	54.5\\
93	54.5\\
94	51\\
95	53\\
96	52\\
97	53.5\\
98	67.5\\
99	53\\
};\label{fig:parameter:all:2}
\addlegendentry{set 2};

\addplot [color=green,line width=1.5pt,dashed]
  table[row sep=crcr]{0	122.8\\
1	105.833\\
2	99.25\\
3	89.3333\\
4	79\\
5	68\\
6	76.5\\
7	68\\
8	73.5\\
9	73.5\\
10	74.5\\
11	75\\
12	66\\
13	63\\
14	70\\
15	62.5\\
16	67.5\\
17	61.5\\
18	60\\
19	63.5\\
20	62\\
21	63\\
22	62\\
23	62\\
24	61\\
25	62\\
26	62\\
27	62\\
28	62\\
29	60\\
30	59\\
31	57.5\\
32	63\\
33	62\\
34	61\\
35	61.5\\
36	61.5\\
37	60.5\\
38	60.5\\
39	58\\
40	59\\
41	57.5\\
42	57.5\\
43	57.5\\
44	52\\
45	54\\
46	51.5\\
47	50.5\\
48	50\\
49	51\\
50	51\\
51	51\\
52	57.5\\
53	51\\
54	53.5\\
55	52.5\\
56	51\\
57	51.5\\
58	56\\
59	55\\
60	52\\
61	57.5\\
62	56.5\\
63	57.5\\
64	56.5\\
65	61.5\\
66	58\\
67	61\\
68	57\\
69	56.5\\
70	56.5\\
71	56\\
72	55.5\\
73	55\\
74	56\\
75	55\\
76	58.5\\
77	55\\
78	57.5\\
79	55\\
80	57\\
81	58.5\\
82	52\\
83	54.5\\
84	58\\
85	53\\
86	52\\
87	54.5\\
88	57\\
89	52\\
90	52\\
91	56\\
92	58.5\\
93	51.5\\
94	52.5\\
95	58.5\\
96	61.5\\
97	56\\
98	58.5\\
99	56\\
};\label{fig:parameter:all:3}
\addlegendentry{set 3};

\addplot [color=cyan,line width=1.5pt,solid]
  table[row sep=crcr]{0	150.833\\
1	110.4\\
2	97.6667\\
3	85\\
4	100\\
5	81\\
6	76\\
7	76\\
8	103\\
9	128.5\\
10	86\\
11	77\\
12	74\\
13	74\\
14	70\\
15	70.5\\
16	67\\
17	70.5\\
18	63\\
19	66.5\\
20	66\\
21	67.5\\
22	65.5\\
23	65.5\\
24	66.5\\
25	65.5\\
26	65.5\\
27	65\\
28	65.5\\
29	65.5\\
30	64\\
31	63\\
32	63\\
33	61.5\\
34	61.5\\
35	60.5\\
36	60\\
37	60\\
38	59\\
39	58.5\\
40	57\\
41	56\\
42	57.5\\
43	57\\
44	58.5\\
45	57\\
46	58\\
47	58.5\\
48	58.5\\
49	58.5\\
50	59.5\\
51	57\\
52	59.5\\
53	59.5\\
54	58\\
55	55.5\\
56	57\\
57	59.5\\
58	55.5\\
59	60.5\\
60	60\\
61	60.5\\
62	61\\
63	55.5\\
64	60\\
65	56.5\\
66	56.5\\
67	56.5\\
68	56.5\\
69	60\\
70	56.5\\
71	56.5\\
72	60.5\\
73	56.5\\
74	56.5\\
75	56.5\\
76	56.5\\
77	56.5\\
78	56.5\\
79	61\\
80	61.5\\
81	56.5\\
82	56.5\\
83	56.5\\
84	59.5\\
85	56.5\\
86	56\\
87	56\\
88	56\\
89	56\\
90	55.5\\
91	56\\
92	55.5\\
93	55\\
94	55.5\\
95	55\\
96	56.5\\
97	55\\
98	55\\
99	63\\
};\label{fig:parameter:all:4}
\addlegendentry{set 4};

\end{axis}
\end{tikzpicture}
	  \subfloat[Variation of the scaled surface tension $\sigma$.]{\label{subfig:parameter_2}{
%
%
\begin{tikzpicture}

\begin{axis}[%
width=1.5\figurewidth,
height=0.95\figureheight,
at={(0\figurewidth,0\figureheight)},
scale only axis,
separate axis lines,
every outer x axis line/.append style={black},
every x tick label/.append style={font=\color{black}},
xmin=0,
xmax=99,
major tick length=0.15cm,
minor tick length=0.075cm,
tick style={thick,color=black},
xtick={0,50,99},
every outer y axis line/.append style={black},
every y tick label/.append style={font=\color{black}},
ymin=40,
ymax=80,
ylabel style={align=center},
axis background/.style={fill=white},
legend style={legend cell align=left,align=left,draw=black},
legend pos=outer north east
]
\addplot [color=red,line width=1.5pt,dotted]
  table[row sep=crcr]{
0	53.5\\
1	61.1667\\
2	65.25\\
3	54\\
4	53\\
5	54\\
6	60\\
7	62.25\\
8	62.75\\
9	67.2\\
10	54\\
11	54\\
12	53.5\\
13	54\\
14	54\\
15	54\\
16	54\\
17	55\\
18	55\\
19	54\\
20	54\\
21	54.5\\
22	53\\
23	57\\
24	53\\
25	54\\
26	54\\
27	51\\
28	64.2\\
29	49\\
30	46\\
31	63\\
32	62\\
33	49.5\\
34	52\\
35	50\\
36	62.25\\
37	48\\
38	63.4\\
39	46.5\\
40	45.5\\
41	57\\
42	48.75\\
43	43.5\\
44	55\\
45	50.5\\
46	52.8\\
47	46.2\\
48	62\\
49	55.3333\\
50	64.2\\
51	64\\
52	54.2\\
53	55.6\\
54	56.3333\\
55	56.4\\
56	63.75\\
57	58.5\\
58	57\\
59	51\\
60	51\\
61	53.2857\\
62	63.75\\
63	57.2\\
64	50.5\\
65	50\\
66	59.6\\
67	55.4\\
68	50.4\\
69	59\\
70	57.6667\\
71	56.75\\
72	59.3333\\
73	57\\
74	54\\
75	56\\
76	52.5\\
77	58.2\\
78	55.3333\\
79	59.4\\
80	63.6\\
81	59.8\\
82	59\\
83	48\\
84	57.4\\
85	45\\
86	65\\
87	65.4\\
88	59.2\\
89	53.8\\
90	64.2\\
91	53.5\\
92	57.75\\
93	60.625\\
94	59\\
95	58.8333\\
96	67.4\\
97	53\\
98	59.8\\
99	72.25\\
};\label{fig:parameter:sigma:1}
\addlegendentry{$\sigma=0.02$};

\addplot [color=blue,line width=1.5pt,dashdotted]
  table[row sep=crcr]{
0	52\\
1	57.2857\\
2	60.2\\
3	49.5\\
4	52.5\\
5	53\\
6	53\\
7	53\\
8	53\\
9	53\\
10	53\\
11	52\\
12	53\\
13	54\\
14	57.1429\\
15	55\\
16	51.5\\
17	52.5\\
18	49.5\\
19	62.6667\\
20	49.1429\\
21	60\\
22	52.5\\
23	51.5\\
24	52\\
25	63\\
26	57.8333\\
27	61.6667\\
28	54.8571\\
29	57.5\\
30	55.1667\\
31	67.1667\\
32	59.3333\\
33	45.5\\
34	45.5\\
35	55.4286\\
36	61\\
37	50.3333\\
38	60.8333\\
39	49.5\\
40	58.4\\
41	47.3333\\
42	57\\
43	52.5\\
44	49.6667\\
45	60\\
46	51\\
47	67.5\\
48	61.5\\
49	62.6667\\
50	59.6667\\
51	63\\
52	59\\
53	55.5\\
54	59.25\\
55	57.6667\\
56	53\\
57	57.4444\\
58	57\\
59	62.8\\
60	66\\
61	60.75\\
62	61\\
63	66.6667\\
64	53.5714\\
65	59.2857\\
66	62.2857\\
67	55\\
68	62\\
69	63\\
70	55.6667\\
71	56.6667\\
72	53.6667\\
73	55\\
74	56.8571\\
75	64.2\\
76	65\\
77	58.8333\\
78	59.75\\
79	57.1667\\
80	62.3333\\
81	53.1667\\
82	62\\
83	62.8\\
84	60.3333\\
85	61.3333\\
86	60.4286\\
87	54.5714\\
88	57.75\\
89	64.5714\\
90	57.8571\\
91	59.5714\\
92	60.4\\
93	63.3333\\
94	55\\
95	61.25\\
96	57.4286\\
97	54.75\\
98	54\\
99	57.7143\\
};\label{fig:parameter:sigma:2}
\addlegendentry{$\sigma=0.1$};

\addplot [color=green,line width=1.5pt,dashed]
  table[row sep=crcr]{0	78\\
1	55.6667\\
2	71.3333\\
3	59.3333\\
4	52.6667\\
5	60\\
6	60.25\\
7	54.7778\\
8	51.8571\\
9	53.6667\\
10	70\\
11	52\\
12	51.5\\
13	58.6667\\
14	54\\
15	60.6667\\
16	57\\
17	61.5\\
18	66.25\\
19	58.3333\\
20	54\\
21	60.3333\\
22	47.5\\
23	56.1429\\
24	61.6\\
25	64.6667\\
26	62.2857\\
27	61.4\\
28	62.6667\\
29	66.6667\\
30	54.3333\\
31	58\\
32	62.8\\
33	61.25\\
34	61.125\\
35	56.4\\
36	53\\
37	54\\
38	60.1667\\
39	54.6\\
40	62.4286\\
41	60.5\\
42	66.6667\\
43	54.6667\\
44	61.8889\\
45	61\\
46	51.75\\
47	54.3333\\
48	58.5\\
49	57.2222\\
50	65\\
51	59\\
52	59.5\\
53	57.75\\
54	60.1429\\
55	54.4\\
56	55.3333\\
57	61.75\\
58	58.5\\
59	75.75\\
60	56.125\\
61	53.6\\
62	60\\
63	54\\
64	61.6667\\
65	54.375\\
66	58.3333\\
67	54.8889\\
68	55.8571\\
69	62.8\\
70	59.2857\\
71	60.2222\\
72	57.25\\
73	61.3333\\
74	60.3333\\
75	56.8571\\
76	50.75\\
77	65.125\\
78	63.5\\
79	60.875\\
80	58.5\\
81	61.25\\
82	67.8571\\
83	58.4444\\
84	56.25\\
85	55.625\\
86	59.75\\
87	62\\
88	59\\
89	59.5\\
90	59.6\\
91	66.5\\
92	58.125\\
93	55.5\\
94	71.75\\
95	60.6\\
96	57.7778\\
97	56.6667\\
98	57.8889\\
99	54.1429\\
};\label{fig:parameter:sigma:3}
\addlegendentry{$\sigma=1$};

\addplot [color=cyan,line width=1.5pt,solid]
  table[row sep=crcr]{0	60.4444\\
1	59.2\\
2	59.1\\
3	65.25\\
4	58.6667\\
5	59.6667\\
6	63.3333\\
7	62.75\\
8	56.25\\
9	55.5\\
10	64.75\\
11	58.3333\\
12	62\\
13	59.5\\
14	61.3333\\
15	53.25\\
16	63.25\\
17	57.5\\
18	57.5\\
19	57.6667\\
20	57.5\\
21	56\\
22	66.5\\
23	59.5\\
24	54.6\\
25	61.8\\
26	56\\
27	61.8889\\
28	57\\
29	58.5\\
30	57.3333\\
31	68\\
32	56\\
33	64\\
34	52.4\\
35	56.8889\\
36	51.875\\
37	54.875\\
38	59.6\\
39	69\\
40	57.5714\\
41	50.5556\\
42	54.2857\\
43	58.8571\\
44	58.7143\\
45	51.5556\\
46	53\\
47	59.8\\
48	61.6\\
49	56\\
50	61\\
51	49.3333\\
52	60.1111\\
53	61.3333\\
54	71.2857\\
55	60.1111\\
56	56\\
57	64.25\\
58	70.5\\
59	59.1\\
60	63.1667\\
61	55.375\\
62	54.125\\
63	55.7143\\
64	59.375\\
65	59.8\\
66	63.25\\
67	56.6667\\
68	60.5\\
69	56.75\\
70	55.4\\
71	54\\
72	62.1667\\
73	54.5556\\
74	59.5\\
75	58.4\\
76	58.8\\
77	60.875\\
78	56.3333\\
79	51.7778\\
80	55.1429\\
81	53.2\\
82	56\\
83	60\\
84	58\\
85	57.8333\\
86	53\\
87	55.5\\
88	56.8\\
89	52.4444\\
90	60.3333\\
91	61\\
92	53.7778\\
93	57.7778\\
94	58.25\\
95	54.3333\\
96	56.1667\\
97	57.5\\
98	57.5\\
99	68\\
};\label{fig:parameter:sigma:4}
\addlegendentry{$\sigma=10$};

\addplot [color=yellow,line width=1.5pt,dotted]
  table[row sep=crcr]{0	64.5455\\
1	57.4286\\
2	65.8333\\
3	66\\
4	72\\
5	55.4\\
6	55.4\\
7	58.4\\
8	53.25\\
9	55\\
10	55\\
11	57.6667\\
12	46\\
13	52.3333\\
14	47.6667\\
15	44.3333\\
16	49.8\\
17	56.5714\\
18	49.6\\
19	55.75\\
20	61\\
21	56.2\\
22	50.8\\
23	54.5\\
24	53\\
25	59\\
26	51.1111\\
27	52.8889\\
28	53.125\\
29	64.5\\
30	65.75\\
31	59.75\\
32	57.25\\
33	50.2\\
34	54.3333\\
35	51.3333\\
36	52.5\\
37	51.5\\
38	51.875\\
39	51.4\\
40	58.75\\
41	51.25\\
42	50.75\\
43	49.6667\\
44	61.1667\\
45	54.8\\
46	54.3333\\
47	52.6\\
48	55.8571\\
49	51.6\\
50	54.4444\\
51	50.8889\\
52	49.7143\\
53	50\\
54	58.5\\
55	48.8333\\
56	53.1\\
57	50.625\\
58	55.1111\\
59	58.7\\
60	62.625\\
61	53\\
62	53\\
63	52.8\\
64	55.1111\\
65	56.7\\
66	58.6\\
67	58.5\\
68	53.6\\
69	58.3333\\
70	51.8571\\
71	75.3846\\
72	55.5556\\
73	52.8889\\
74	55.2222\\
75	52.6667\\
76	55.8571\\
77	51.8\\
78	52.5556\\
79	57.9\\
80	52.875\\
81	57\\
82	51\\
83	52.7778\\
84	52\\
85	51.7\\
86	54.4444\\
87	53.8333\\
88	53.8\\
89	56\\
90	53.375\\
91	52.375\\
92	57.25\\
93	52\\
94	51\\
95	52.4\\
96	53.6\\
97	57.5\\
98	61.1\\
99	51.6667\\
};\label{fig:parameter:sigma:5}
\addlegendentry{$\sigma=90$};

\end{axis}
\end{tikzpicture}

\subfloat[Variation of the Reynolds number via increasing the density 
$\rho_{1}$.]{\label{subfig:parameter_3}{
%
%
\begin{tikzpicture}

\begin{axis}[%
width=1.5\figurewidth,
height=0.95\figureheight,
at={(0\figurewidth,0\figureheight)},
scale only axis,
separate axis lines,
every outer x axis line/.append style={black},
every x tick label/.append style={font=\color{black}},
xmin=0,
xmax=99,
major tick length=0.15cm,
minor tick length=0.075cm,
tick style={thick,color=black},
xtick={0,50,99},
every outer y axis line/.append style={black},
every y tick label/.append style={font=\color{black}},
ymin=40,
ymax=120,
ylabel style={align=center},
axis background/.style={fill=white},
legend style={legend cell align=left,align=left,draw=black},
legend pos=outer north east
]
\addplot [color=red,line width=1.5pt,dotted]
  table[row sep=crcr]{
0	64.25\\
1	58.5556\\
2	58.5714\\
3	60\\
4	57.6667\\
5	59.5\\
6	62.4\\
7	53.5\\
8	52.25\\
9	57.6667\\
10	59\\
11	61\\
12	72.3333\\
13	61.75\\
14	65.8\\
15	51\\
16	56.3333\\
17	58.75\\
18	57.25\\
19	57.25\\
20	52\\
21	48.5\\
22	52.4\\
23	54.25\\
24	50.4\\
25	58\\
26	58.6667\\
27	57\\
28	51.5\\
29	58.5\\
30	55.4286\\
31	53.5\\
32	50.5\\
33	55.75\\
34	58.8\\
35	56.5\\
36	51.2222\\
37	59.5556\\
38	57.25\\
39	64.75\\
40	46.2222\\
41	50.875\\
42	53\\
43	60.5\\
44	57.6\\
45	59.3333\\
46	60.4286\\
47	61.625\\
48	56.5\\
49	56.6\\
50	53.6\\
51	51.4444\\
52	54.5714\\
53	51.8333\\
54	59.3333\\
55	62.4444\\
56	61\\
57	52\\
58	50\\
59	50.4\\
60	63.75\\
61	60.8\\
62	60.8333\\
63	56.5\\
64	55\\
65	55.7143\\
66	55.6\\
67	54.3333\\
68	55.1667\\
69	48.5\\
70	47.4\\
71	50.2\\
72	51.625\\
73	53.8333\\
74	52.4444\\
75	59.3\\
76	54.6\\
77	49.7778\\
78	56.8571\\
79	54.875\\
80	55.7778\\
81	64.1429\\
82	67.2\\
83	57.375\\
84	55.4\\
85	54.5556\\
86	64.875\\
87	52.6\\
88	60.625\\
89	60.125\\
90	60.1\\
91	54\\
92	60.75\\
93	60\\
94	60.7778\\
95	59.4286\\
96	55.8\\
97	55.5556\\
98	53.1429\\
99	52\\
};\label{fig:parameter:Re:1}
\addlegendentry{$Re=35$};

\addplot [color=blue,line width=1.5pt,dashdotted]
  table[row sep=crcr]{
0	70.875\\
1	63.8889\\
2	61.4286\\
3	56.6667\\
4	59.3333\\
5	61.5\\
6	63.6\\
7	59\\
8	57.75\\
9	56\\
10	56.3333\\
11	64.6667\\
12	53.6667\\
13	57.5\\
14	54\\
15	65.75\\
16	56.8333\\
17	60.5\\
18	66.2\\
19	61.5\\
20	51.75\\
21	51.6\\
22	51\\
23	55.6\\
24	63.75\\
25	57\\
26	56\\
27	60.7778\\
28	63.875\\
29	65.2\\
30	55.25\\
31	51.5\\
32	55.8\\
33	64.25\\
34	60\\
35	58.75\\
36	50.8571\\
37	65.25\\
38	53.1111\\
39	56.1429\\
40	60.4\\
41	61.75\\
42	59.8889\\
43	59.1111\\
44	59.1429\\
45	64\\
46	62.75\\
47	56.375\\
48	55.4444\\
49	60.5\\
50	61.3333\\
51	62.8889\\
52	71.4286\\
53	63.6\\
54	54.8333\\
55	49\\
56	59.4\\
57	63.875\\
58	63.2\\
59	62.125\\
60	64.25\\
61	61.8333\\
62	57\\
63	60\\
64	58.7143\\
65	62\\
66	54.4\\
67	59.1111\\
68	58.7143\\
69	65.6\\
70	64.8889\\
71	52.125\\
72	59.8889\\
73	58.875\\
74	62.1111\\
75	54.2222\\
76	62.2222\\
77	65.7143\\
78	62.5\\
79	60.3333\\
80	61.25\\
81	55.7\\
82	59\\
83	57.1111\\
84	62.5\\
85	60.9\\
86	55\\
87	63\\
88	53.1667\\
89	62.4444\\
90	54.625\\
91	55.3333\\
92	61.8889\\
93	57.8333\\
94	63.25\\
95	61.8\\
96	65.625\\
97	67.5\\
98	57.7143\\
99	60.7778\\
};\label{fig:parameter:Re:2}
\addlegendentry{$Re=70$};

\addplot [color=green,line width=1.5pt,dashed]
  table[row sep=crcr]{0	74.125\\
1	73.1111\\
2	70.7143\\
3	81\\
4	66\\
5	69.5\\
6	64\\
7	66\\
8	64.25\\
9	59\\
10	59.3333\\
11	63.6667\\
12	56.3333\\
13	62\\
14	57.25\\
15	62.4\\
16	65.375\\
17	72\\
18	70.8\\
19	57.5\\
20	55\\
21	53.75\\
22	59.2\\
23	60\\
24	60.3333\\
25	61.6667\\
26	65.5556\\
27	64\\
28	71.75\\
29	71\\
30	70.8\\
31	66.4\\
32	75\\
33	67.125\\
34	56.6667\\
35	56.8571\\
36	69.6\\
37	58.9\\
38	65.1429\\
39	68.6\\
40	67.8889\\
41	59.1111\\
42	65.875\\
43	70.4\\
44	70\\
45	75.3333\\
46	73.3333\\
47	58.7778\\
48	83.7667\\
49	69.75\\
50	62.625\\
51	66.8\\
52	60.6\\
53	67.8\\
54	77.4444\\
55	68.4286\\
56	71.2\\
57	68.4444\\
58	78.1667\\
59	69.75\\
60	61.9\\
61	59.3\\
62	70\\
63	74.8\\
64	68.4\\
65	67\\
66	63\\
67	65.6667\\
68	76\\
69	63.8889\\
70	59.7\\
71	68.7778\\
72	66.6667\\
73	59.7778\\
74	65.8571\\
75	62.1667\\
76	64.6667\\
77	61.8889\\
78	72\\
79	64\\
80	73.1111\\
81	62.25\\
82	78.375\\
83	58.8889\\
84	63.25\\
85	73.6667\\
86	84.5484\\
87	70\\
88	68.125\\
89	59.625\\
90	69.5556\\
91	74.4444\\
92	75.5\\
93	81.2\\
94	74.8571\\
95	79.5\\
96	59.7\\
97	61.1111\\
98	61.6667\\
99	69.75\\
};\label{fig:parameter:Re:3}
\addlegendentry{$Re=140$};

\addplot [color=cyan,line width=1.5pt,solid]
  table[row sep=crcr]{0	82.625\\
1	71.1111\\
2	67.4286\\
3	67\\
4	69.3333\\
5	63.5\\
6	64.6\\
7	66.75\\
8	75.25\\
9	67.6667\\
10	87\\
11	82.3333\\
12	77.3333\\
13	75.75\\
14	65.5\\
15	70.75\\
16	67.5\\
17	70.6\\
18	95.44\\
19	67.5\\
20	62.2\\
21	66.25\\
22	67\\
23	65.75\\
24	76\\
25	74.2222\\
26	74.875\\
27	72.1429\\
28	73.8\\
29	82.069\\
30	76.2\\
31	119.477\\
32	81.1111\\
33	72\\
34	65.375\\
35	72.8\\
36	73.5\\
37	66.2222\\
38	77.1667\\
39	72.5556\\
40	83.5714\\
41	82.1111\\
42	75.4\\
43	83.5\\
44	80\\
45	71.75\\
46	58.6667\\
47	79.8889\\
48	80.25\\
49	70.7778\\
50	74.8333\\
51	76.5\\
52	68.8333\\
53	80.3333\\
54	68.875\\
55	76.7778\\
56	80.625\\
57	75.4\\
58	83.25\\
59	65.5\\
60	68.1\\
61	74.2857\\
62	75.3333\\
63	72.6\\
64	75.1111\\
65	77.4\\
66	82.25\\
67	67.7778\\
68	62.6667\\
69	75.4\\
70	80.625\\
71	66.8\\
72	67.625\\
73	76.4286\\
74	74\\
75	78.625\\
76	65.25\\
77	76.3333\\
78	73.7\\
79	60.625\\
80	78.125\\
81	67.7\\
82	75.125\\
83	67.875\\
84	73.5\\
85	80.5\\
86	81.2222\\
87	70.5556\\
88	72.2857\\
89	75.1111\\
90	68.625\\
91	84.6667\\
92	74.4444\\
93	84.1429\\
94	66.1111\\
95	62.6667\\
96	62.2222\\
97	74.5\\
98	83.8889\\
99	75.5556\\
};\label{fig:parameter:Re:4}
\addlegendentry{$Re=280$};

\addplot [color=yellow,line width=1.5pt,dotted]
  table[row sep=crcr]{0	98.875\\
1	77.5556\\
2	88.1429\\
3	73\\
4	75.3333\\
5	71.25\\
6	67.6\\
7	83.25\\
8	91.25\\
9	80\\
10	95\\
11	94.3333\\
12	89\\
13	89.75\\
14	81.4\\
15	82.5\\
16	87\\
17	78.2\\
18	81\\
19	89.5\\
20	70.4\\
21	71.75\\
22	79.5\\
23	87.75\\
24	82\\
25	86.7778\\
26	74\\
27	66.4286\\
28	75.6\\
29	81\\
30	88.8\\
31	87.8\\
32	86.625\\
33	75\\
34	73.625\\
35	83.3333\\
36	87.2\\
37	66.625\\
38	76.8\\
39	82.25\\
40	80\\
41	75\\
42	77.8\\
43	85\\
44	99.6667\\
45	71.5556\\
46	78.4444\\
47	84.1111\\
48	79.2857\\
49	73.5556\\
50	83.2\\
51	78.4\\
52	76.7\\
53	76.8889\\
54	78.1429\\
55	88\\
56	87\\
57	93.4\\
58	63.1111\\
59	67.2\\
60	66.5556\\
61	80.8571\\
62	84.2\\
63	86.6\\
64	75.375\\
65	94.75\\
66	77.2\\
67	70.6667\\
68	74.9\\
69	84.125\\
70	85\\
71	78.2222\\
72	78.4286\\
73	81.25\\
74	88.875\\
75	72.6667\\
76	87.2857\\
77	76.9\\
78	61.8889\\
79	75.625\\
80	80.125\\
81	76.1111\\
82	80\\
83	77.875\\
84	85\\
85	80.4444\\
86	83.625\\
87	73.5714\\
88	81.1111\\
89	74.5556\\
90	78.1111\\
91	82.5\\
92	89.6667\\
93	70.5556\\
94	69\\
95	61.75\\
96	76.8\\
97	84\\
98	86.1111\\
99	74.8889\\
};\label{fig:parameter:Re:5}
\addlegendentry{$Re=560$};

\end{axis}
\end{tikzpicture}
	\caption{Iteration numbers for the parameter study. The x-axis shows the time step and the y-axis the average number of FGMRES iterations per semismooth Newton step.}
	\label{fig:parameter}
\end{figure}

\begin{figure}[!htbp]
	\centering
	\setlength\figureheight{4.7cm} 
	\setlength\figurewidth{6cm}
	\subfloat[Variation of the mobility $b$.]{\label{subfig:parameter_4}{
%
%
\begin{tikzpicture}

\begin{axis}[%
width=1.5\figurewidth,
height=0.95\figureheight,
at={(0\figurewidth,0\figureheight)},
scale only axis,
separate axis lines,
every outer x axis line/.append style={black},
every x tick label/.append style={font=\color{black}},
xmin=0,
xmax=99,
major tick length=0.15cm,
minor tick length=0.075cm,
tick style={thick,color=black},
xtick={0,50,99},
every outer y axis line/.append style={black},
every y tick label/.append style={font=\color{black}},
ymin=35,
ymax=75,
ylabel style={align=center},
axis background/.style={fill=white},
legend style={legend cell align=left,align=left,draw=black},
legend pos=outer north east
]
\addplot [color=red,line width=1.5pt,dotted]
  table[row sep=crcr]{0	62.7143\\
1	50.4286\\
2	56\\
3	51.6\\
4	50.8\\
5	48\\
6	45.6667\\
7	57.6667\\
8	54.3333\\
9	47.3333\\
10	52.25\\
11	47.25\\
12	47.25\\
13	51\\
14	54.6667\\
15	55.3333\\
16	49.6667\\
17	52.25\\
18	50.6\\
19	48.6\\
20	44\\
21	46.6667\\
22	52.75\\
23	47.5\\
24	50.3333\\
25	48.4286\\
26	48.375\\
27	51.4286\\
28	47.5\\
29	48.5\\
30	46.5\\
31	54.9\\
32	49.4286\\
33	49.6667\\
34	50.875\\
35	51\\
36	46.875\\
37	50.5\\
38	49.6\\
39	48.1\\
40	48.4286\\
41	54\\
42	47.8\\
43	50.2\\
44	47\\
45	50\\
46	56.75\\
47	54.7778\\
48	51.1429\\
49	47.9167\\
50	52.2222\\
51	52.4286\\
52	48.6667\\
53	48.5714\\
54	48\\
55	50.7\\
56	51.875\\
57	54.5\\
58	57\\
59	49\\
60	47.25\\
61	46\\
62	52.3333\\
63	54\\
64	49.9\\
65	51.6\\
66	49.8889\\
67	51.3333\\
68	52\\
69	44.8\\
70	49.5\\
71	48.7143\\
72	51.3333\\
73	51.5\\
74	51.5\\
75	52.625\\
76	57.375\\
77	48.4444\\
78	52\\
79	49.2857\\
80	49.8\\
81	52.25\\
82	51.3333\\
83	50.3333\\
84	52.1111\\
85	53.3333\\
86	50.2222\\
87	53\\
88	53.75\\
89	52.1\\
90	54.125\\
91	51.75\\
92	54.3333\\
93	56.5714\\
94	53.8889\\
95	60.7143\\
96	51.375\\
97	53.375\\
98	49.2\\
99	49.1111\\
};\label{fig:parameter:mob:1}
\addlegendentry{$b=7\cdot 10^{-5}$};

\addplot [color=blue,line width=1.5pt,dashdotted]
  table[row sep=crcr]{0	64.25\\
1	58.5556\\
2	58.5714\\
3	60\\
4	57.6667\\
5	59.5\\
6	62.4\\
7	53.5\\
8	52.25\\
9	57.6667\\
10	59\\
11	61\\
12	72.3333\\
13	61.75\\
14	65.8\\
15	51\\
16	56.3333\\
17	58.75\\
18	57.25\\
19	57.25\\
20	52\\
21	48.5\\
22	52.4\\
23	54.25\\
24	50.4\\
25	58\\
26	58.6667\\
27	57\\
28	51.5\\
29	58.5\\
30	55.4286\\
31	53.5\\
32	50.5\\
33	55.75\\
34	58.8\\
35	56.5\\
36	51.2222\\
37	59.5556\\
38	57.25\\
39	64.75\\
40	46.2222\\
41	50.875\\
42	53\\
43	60.5\\
44	57.6\\
45	59.3333\\
46	60.4286\\
47	61.625\\
48	56.5\\
49	56.6\\
50	53.6\\
51	51.4444\\
52	54.5714\\
53	51.8333\\
54	59.3333\\
55	62.4444\\
56	61\\
57	52\\
58	50\\
59	50.4\\
60	63.75\\
61	60.8\\
62	60.8333\\
63	56.5\\
64	55\\
65	55.7143\\
66	55.6\\
67	54.3333\\
68	55.1667\\
69	48.5\\
70	47.4\\
71	50.2\\
72	51.625\\
73	53.8333\\
74	52.4444\\
75	59.3\\
76	54.6\\
77	49.7778\\
78	56.8571\\
79	54.875\\
80	55.7778\\
81	64.1429\\
82	67.2\\
83	57.375\\
84	55.4\\
85	54.5556\\
86	64.875\\
87	52.6\\
88	60.625\\
89	60.125\\
90	60.1\\
91	54\\
92	60.75\\
93	60\\
94	60.7778\\
95	59.4286\\
96	55.8\\
97	55.5556\\
98	53.1429\\
99	52\\
};\label{fig:parameter:mob:2}
\addlegendentry{$b=4\cdot 10^{-5}$};

\addplot [color=green,line width=1.5pt,dashed]
  table[row sep=crcr]{0	57\\
1	53.1667\\
2	48.6\\
3	47.5\\
4	47.25\\
5	50.75\\
6	49\\
7	51\\
8	51.75\\
9	47.25\\
10	47.25\\
11	45.3333\\
12	47.6667\\
13	44\\
14	47\\
15	46.25\\
16	46\\
17	49.25\\
18	48.4\\
19	50\\
20	48.2\\
21	42\\
22	46\\
23	49.125\\
24	50.2857\\
25	50.125\\
26	48.8889\\
27	50.2857\\
28	49.1111\\
29	48.625\\
30	51.5\\
31	52.6667\\
32	48.8889\\
33	47.1429\\
34	48\\
35	48.1111\\
36	49.25\\
37	46\\
38	45.4\\
39	52.75\\
40	48.375\\
41	50.1667\\
42	46.25\\
43	46.6667\\
44	50.125\\
45	48.5\\
46	47\\
47	54.25\\
48	48.7778\\
49	53.5556\\
50	49.625\\
51	51.4\\
52	48.625\\
53	43.5\\
54	48.25\\
55	47.5\\
56	47.6667\\
57	49.4444\\
58	50.25\\
59	50.8\\
60	46\\
61	49\\
62	47.6667\\
63	55.2\\
64	48.75\\
65	48\\
66	47.6\\
67	49.25\\
68	48.8333\\
69	49.625\\
70	48.8571\\
71	51\\
72	49.625\\
73	50.5\\
74	47.25\\
75	49.3333\\
76	51.875\\
77	49.8\\
78	49.2222\\
79	49.7143\\
80	49.25\\
81	53.8\\
82	48.8889\\
83	52.7778\\
84	50.4\\
85	52.8\\
86	50.6667\\
87	45.4286\\
88	50.2\\
89	49.875\\
90	52.3333\\
91	52.8571\\
92	47.6667\\
93	53.75\\
94	49.3333\\
95	51.2222\\
96	51.5556\\
97	48.8571\\
98	47.6\\
99	54.6667\\
};\label{fig:parameter:mob:3}
\addlegendentry{$b=1\cdot 10^{-4}$};

\addplot [color=cyan,line width=1.5pt,solid]
  table[row sep=crcr]{0	56.5\\
1	49.6667\\
2	47\\
3	46.5\\
4	47.6\\
5	47.25\\
6	45.75\\
7	45.3333\\
8	45.6667\\
9	43.3333\\
10	43\\
11	45.6667\\
12	47\\
13	50\\
14	47.6667\\
15	46\\
16	44\\
17	44.25\\
18	44.2\\
19	46.5\\
20	44\\
21	46.75\\
22	45.8\\
23	43.25\\
24	47.1111\\
25	49.25\\
26	47\\
27	50.7778\\
28	46.7143\\
29	44.5\\
30	44.6667\\
31	43\\
32	45.2\\
33	43\\
34	50\\
35	46.5556\\
36	47.875\\
37	48.3\\
38	47.5714\\
39	48\\
40	44.25\\
41	47.8\\
42	46.2\\
43	45.375\\
44	45.4\\
45	45.2\\
46	46\\
47	47.4444\\
48	49.7\\
49	46.0909\\
50	49.8889\\
51	49\\
52	48.8333\\
53	44.6\\
54	35.8\\
55	46\\
56	49.3636\\
57	47\\
58	49.7778\\
59	48.4444\\
60	48.5714\\
61	43\\
62	50.4\\
63	48.875\\
64	49.4\\
65	50.9\\
66	46.75\\
67	45.6\\
68	49.375\\
69	50.375\\
70	47.1111\\
71	48.875\\
72	48.5556\\
73	46.6364\\
74	47.5556\\
75	46.4286\\
76	42.1667\\
77	48.4444\\
78	44.5\\
79	47.3333\\
80	46.8571\\
81	46.5\\
82	51.5\\
83	48\\
84	47.5\\
85	49.2\\
86	48\\
87	48.3333\\
88	46.4444\\
89	50.3636\\
90	49.4444\\
91	46.4286\\
92	45.6667\\
93	45\\
94	48\\
95	49\\
96	48.7\\
97	48.8889\\
98	48.5556\\
99	54.1429\\
};\label{fig:parameter:mob:4}
\addlegendentry{$b=3\cdot 10^{-4}$};

\end{axis}
\end{tikzpicture}
	  \subfloat[Variation of the penalty parameter $\hp$.]{\label{subfig:parameter_5}{
%
%
\begin{tikzpicture}

\begin{axis}[%
width=1.5\figurewidth,
height=0.95\figureheight,
at={(0\figurewidth,0\figureheight)},
scale only axis,
separate axis lines,
every outer x axis line/.append style={black},
every x tick label/.append style={font=\color{black}},
xmin=0,
xmax=99,
major tick length=0.15cm,
minor tick length=0.075cm,
tick style={thick,color=black},
xtick={0,50,99},
every outer y axis line/.append style={black},
every y tick label/.append style={font=\color{black}},
ymin=45,
ymax=80,
ylabel style={align=center},
axis background/.style={fill=white},
legend style={legend cell align=left,align=left,draw=black},
legend pos=outer north east
]
\addplot [color=red,line width=1.5pt,dotted]
  table[row sep=crcr]{0	72.4\\
1	63\\
2	59.75\\
3	63.3333\\
4	70.3333\\
5	79.3333\\
6	70.6667\\
7	68\\
8	68\\
9	66.5\\
10	58\\
11	59\\
12	67\\
13	67.5\\
14	59.5\\
15	62.5\\
16	58\\
17	76.3333\\
18	74.6667\\
19	79.6667\\
20	79\\
21	68.3333\\
22	65.6667\\
23	74.3333\\
24	70.6667\\
25	56\\
26	52.5\\
27	71.6667\\
28	72.3333\\
29	67\\
30	59.6667\\
31	66.4\\
32	63.3333\\
33	58.3333\\
34	65\\
35	66.3333\\
36	61.3333\\
37	57.3333\\
38	52.3333\\
39	56\\
40	63.3333\\
41	58\\
42	66\\
43	51.3333\\
44	63.6667\\
45	60.3333\\
46	56.3333\\
47	68.6667\\
48	63.6667\\
49	65\\
50	70\\
51	57.6667\\
52	68.8\\
53	50.6667\\
54	62\\
55	62.6667\\
56	64.3333\\
57	56.6667\\
58	72\\
59	67\\
60	68.3333\\
61	65.6\\
62	55\\
63	54\\
64	61\\
65	66.6667\\
66	68.6667\\
67	58\\
68	62.2\\
69	59\\
70	69.3333\\
71	64.75\\
72	56.6667\\
73	53\\
74	54.6667\\
75	53.3333\\
76	61.3333\\
77	63.3333\\
78	58.3333\\
79	68\\
80	61\\
81	73.4\\
82	76.3333\\
83	56.3333\\
84	58.4\\
85	70.75\\
86	64\\
87	63.75\\
88	69.5\\
89	63.5\\
90	59.75\\
91	64.6\\
92	70.75\\
93	73.5\\
94	64.2\\
95	68.3333\\
96	73\\
97	70\\
98	58.6667\\
99	69.75\\
};\label{fig:parameter:penalty:1}
\addlegendentry{$s=10^{4}$};

\addplot [color=blue,line width=1.5pt,dashdotted]
  table[row sep=crcr]{0	64.25\\
1	58.5556\\
2	58.5714\\
3	60\\
4	57.6667\\
5	59.5\\
6	62.4\\
7	53.5\\
8	52.25\\
9	57.6667\\
10	59\\
11	61\\
12	72.3333\\
13	61.75\\
14	65.8\\
15	51\\
16	56.3333\\
17	58.75\\
18	57.25\\
19	57.25\\
20	52\\
21	48.5\\
22	52.4\\
23	54.25\\
24	50.4\\
25	58\\
26	58.6667\\
27	57\\
28	51.5\\
29	58.5\\
30	55.4286\\
31	53.5\\
32	50.5\\
33	55.75\\
34	58.8\\
35	56.5\\
36	51.2222\\
37	59.5556\\
38	57.25\\
39	64.75\\
40	46.2222\\
41	50.875\\
42	53\\
43	60.5\\
44	57.6\\
45	59.3333\\
46	60.4286\\
47	61.625\\
48	56.5\\
49	56.6\\
50	53.6\\
51	51.4444\\
52	54.5714\\
53	51.8333\\
54	59.3333\\
55	62.4444\\
56	61\\
57	52\\
58	50\\
59	50.4\\
60	63.75\\
61	60.8\\
62	60.8333\\
63	56.5\\
64	55\\
65	55.7143\\
66	55.6\\
67	54.3333\\
68	55.1667\\
69	48.5\\
70	47.4\\
71	50.2\\
72	51.625\\
73	53.8333\\
74	52.4444\\
75	59.3\\
76	54.6\\
77	49.7778\\
78	56.8571\\
79	54.875\\
80	55.7778\\
81	64.1429\\
82	67.2\\
83	57.375\\
84	55.4\\
85	54.5556\\
86	64.875\\
87	52.6\\
88	60.625\\
89	60.125\\
90	60.1\\
91	54\\
92	60.75\\
93	60\\
94	60.7778\\
95	59.4286\\
96	55.8\\
97	55.5556\\
98	53.1429\\
99	52\\
};\label{fig:parameter:penalty:2}
\addlegendentry{$s=10^{6}$};

\addplot [color=green,line width=1.5pt,dashed]
  table[row sep=crcr]{0	70\\
1	56.25\\
2	55.25\\
3	64.3333\\
4	56\\
5	59.8\\
6	46.2\\
7	51.5714\\
8	54.25\\
9	63.3333\\
10	67\\
11	74\\
12	57\\
13	51\\
14	54.8\\
15	54\\
16	63.75\\
17	51.8333\\
18	55.1667\\
19	50.9091\\
20	53.8\\
21	52\\
22	53\\
23	49.375\\
24	53.4545\\
25	49.375\\
26	51.5\\
27	62\\
28	50\\
29	54.9091\\
30	58.25\\
31	56.25\\
32	47.5\\
33	59.25\\
34	54\\
35	47.5\\
36	50.1667\\
37	59\\
38	56.5\\
39	51.25\\
40	46.1429\\
41	63.1111\\
42	53.3333\\
43	61.25\\
44	55.6667\\
45	58.3636\\
46	54.5\\
47	62.7\\
48	55.875\\
49	52.5\\
50	50.75\\
51	58.5\\
52	58.75\\
53	54.2308\\
54	64.7\\
55	70.1111\\
56	61.2857\\
57	59.8571\\
58	46.25\\
59	69.25\\
60	54.8889\\
61	60.0909\\
62	57.75\\
63	60.3333\\
64	62.3333\\
65	68.7778\\
66	58.25\\
67	59.9091\\
68	65.8333\\
69	51\\
70	55\\
71	47.8\\
72	60.9\\
73	49.6667\\
74	55.4167\\
75	60.3333\\
76	57\\
77	59.8182\\
78	59.1429\\
79	50.25\\
80	57.9091\\
81	53.8571\\
82	66.4\\
83	63.7647\\
84	56.375\\
85	63.9167\\
86	56.4444\\
87	55\\
88	55.75\\
89	56.5385\\
90	61.8\\
91	59.8\\
92	57.9167\\
93	67.2222\\
94	63.4\\
95	55.8333\\
96	54.8333\\
97	58\\
98	54.125\\
99	58.7778\\
};\label{fig:parameter:penalty:3}
\addlegendentry{$s=10^{8}$};

\addplot [color=cyan,line width=1.5pt,solid]
  table[row sep=crcr]{0	77.4\\
1	62.3333\\
2	54.625\\
3	52.3333\\
4	53.75\\
5	53.4\\
6	46\\
7	51.1111\\
8	49.5\\
9	54\\
10	62.3333\\
11	66.3333\\
12	56\\
13	56.75\\
14	46.6\\
15	51.8\\
16	71\\
17	55.5714\\
18	53.8\\
19	65.3333\\
20	45.8\\
21	51.8\\
22	52.5\\
23	53\\
24	55.8182\\
25	53.375\\
26	61.5\\
27	47.6667\\
28	47.4\\
29	57.4545\\
30	55.9167\\
31	51.125\\
32	49.1667\\
33	62\\
34	54.8\\
35	51.8\\
36	54.5455\\
37	61.8182\\
38	58.625\\
39	56.5\\
40	54.2\\
41	61.7\\
42	47.8571\\
43	57.25\\
44	66.6667\\
45	59.6667\\
46	61.375\\
47	58.9\\
48	60.25\\
49	46.6667\\
50	54.8\\
51	57.8462\\
52	64\\
53	53\\
54	65.5\\
55	64.8889\\
56	61.7143\\
57	55.8571\\
58	54.8\\
59	71.75\\
60	54.5556\\
61	64.6364\\
62	55.375\\
63	56.5\\
64	57.5556\\
65	61.4444\\
66	58.1429\\
67	58\\
68	58.1667\\
69	54\\
70	49.1667\\
71	51\\
72	53.3\\
73	49.3333\\
74	55.3077\\
75	61.0833\\
76	63.6\\
77	57.6364\\
78	58.2857\\
79	51.8889\\
80	55.6667\\
81	57.1429\\
82	65.2\\
83	63.5652\\
84	54.75\\
85	57.5556\\
86	63.3846\\
87	58.0769\\
88	49.5\\
89	58.6429\\
90	67.2727\\
91	59.7\\
92	61\\
93	61.1111\\
94	57.2\\
95	63\\
96	51.6667\\
97	60.9091\\
98	54.125\\
99	58.8889\\
};\label{fig:parameter:penalty:4}
\addlegendentry{$s=10^{9}$};

\end{axis}
\end{tikzpicture}

	\caption{Iteration numbers for the parameter study. The x-axis shows the time step and the y-axis the average number of FGMRES iterations per semismooth Newton step.}
	\label{fig:parameter_2}
\end{figure}

\begin{table}[!htbp]\footnotesize
  \centering
  \begin{tabular}{cc|rr|rrrrrrrrr}
      \toprule
       \multicolumn{2}{c|}{Simulation} & \multicolumn{2}{c|}{Newton} & \multicolumn{9}{c}{Parameters}\\\midrule
       Figure & Plot & Max & Avg & $N_{1}^{0}$ & $N_{2}^{0}$ & $Re$ & $\rho_{1}$ & $\hp$ & $\sigma$ & $\epsilon$ & $\tau$ & $b$\\\hline
      \midrule
      \ref{fig:parameter}\subref{subfig:parameter_1} & (\ref{fig:parameter:all:1}) & $6$ & $3$ & $6599$ & $26213$ & $35$ & $1000$ & $10^{4}$ & $15.60$ & $0.040$ & $2.000\cdot 10^{-3}$ & $4\cdot 10^{-5}$\\
        & (\ref{fig:parameter:all:2}) & $6$ & $2$ & $10399$ & $41413$ & $35$ & $1000$ & $10^{4}$ & $15.60$ & $0.020$ & $5.000\cdot 10^{-4}$ & $2\cdot 10^{-5}$\\
        & (\ref{fig:parameter:all:3}) & $6$ & $2$ & $17831$ & $71141$ & $35$ & $1000$ & $10^{4}$ & $15.60$ & $0.010$ & $1.250\cdot 10^{-4}$ & $1\cdot 10^{-5}$\\
        & (\ref{fig:parameter:all:4}) & $6$ & $2$ & $32527$ & $129925$ & $35$ & $1000$ & $10^{4}$ & $15.60$ & $0.005$ & $3.125\cdot 10^{-5}$ & $5\cdot 10^{-6}$\\\hline
      \ref{fig:parameter}\subref{subfig:parameter_2} & (\ref{fig:parameter:sigma:1}) & $8$ & $4$ & $6599$ & $26213$ & $35$ & $1000$ & $10^{6}$ & $0.02$ & $0.040$ & $2.000\cdot 10^{-3}$ & $4\cdot 10^{-5}$\\
        & (\ref{fig:parameter:sigma:2}) & $9$ & $5$ & $6599$ & $26213$ & $35$ & $1000$ & $10^{6}$ & $0.10$ & $0.040$ & $2.000\cdot 10^{-3}$ & $4\cdot 10^{-5}$\\
        & (\ref{fig:parameter:sigma:3}) & $9$ & $6$ & $6599$ & $26213$ & $35$ & $1000$ & $10^{6}$ & $1.00$ & $0.040$ & $2.000\cdot 10^{-3}$ & $4\cdot 10^{-5}$\\
        & (\ref{fig:parameter:sigma:4}) & $9$ & $6$ & $6599$ & $26213$ & $35$ & $1000$ & $10^{6}$ & $10.00$ & $0.040$ & $2.000\cdot 10^{-3}$ & $4\cdot 10^{-5}$\\
        & (\ref{fig:parameter:sigma:5}) & $10$ & $6$ & $6599$ & $26213$ & $35$ & $1000$ & $10^{6}$ & $90.00$ & $0.040$ & $2.000\cdot 10^{-3}$ & $4\cdot 10^{-5}$\\\hline
\ref{fig:parameter}\subref{subfig:parameter_3} & (\ref{fig:parameter:Re:1}) & $10$ & $6$ & $6599$ & $26213$ & $35$ & $1000$ & $10^{6}$ & $15.60$ & $0.040$ & $2.000\cdot 10^{-3}$ & $4\cdot 10^{-5}$\\
        & (\ref{fig:parameter:Re:2}) & $10$ & $7$ & $6599$ & $26213$ & $70$ & $2000$ & $10^{6}$ & $15.60$ & $0.040$ & $2.000\cdot 10^{-3}$ & $4\cdot 10^{-5}$\\
        & (\ref{fig:parameter:Re:3}) & $31$ & $7$ & $6599$ & $26213$ & $140$ & $4000$ & $10^{6}$ & $15.60$ & $0.040$ & $2.000\cdot 10^{-3}$ & $4\cdot 10^{-5}$\\
        & (\ref{fig:parameter:Re:4}) & $44$ & $8$ & $6599$ & $26213$ & $280$ & $8000$ & $10^{6}$ & $15.60$ & $0.040$ & $2.000\cdot 10^{-3}$ & $4\cdot 10^{-5}$\\
        & (\ref{fig:parameter:Re:5}) & $10$ & $7$ & $6599$ & $26213$ & $560$ & $16000$ & $10^{6}$ & $15.60$ & $0.040$ & $2.000\cdot 10^{-3}$ & $4\cdot 10^{-5}$\\\hline
 \ref{fig:parameter_2}\subref{subfig:parameter_4} & (\ref{fig:parameter:mob:1}) & $12$ & $7$ & $6599$ & $26213$ & $35$ & $1000$ & $10^{6}$ & $15.60$ & $0.040$ & $2.000\cdot 10^{-3}$ & $7\cdot 10^{-5}$\\
         & (\ref{fig:parameter:mob:2}) & $10$ & $6$ & $6599$ & $26213$ & $35$ & $1000$ & $10^{6}$ & $15.60$ & $0.040$ & $2.000\cdot 10^{-3}$ & $4\cdot 10^{-5}$\\
         & (\ref{fig:parameter:mob:3}) & $10$ & $7$ & $6599$ & $26213$ & $35$ & $1000$ & $10^{6}$ & $15.60$ & $0.040$ & $2.000\cdot 10^{-3}$ & $1\cdot 10^{-4}$\\
         & (\ref{fig:parameter:mob:4}) & $11$ & $7$ & $6599$ & $26213$ & $35$ & $1000$ & $10^{6}$ & $15.60$ & $0.040$ & $2.000\cdot 10^{-3}$ & $3\cdot 10^{-4}$\\\hline
 \ref{fig:parameter_2}\subref{subfig:parameter_5} & (\ref{fig:parameter:penalty:1}) & $6$ & $3$  & $6599$ & $26213$ & $35$ & $1000$ & $10^{4}$ & $15.60$ & $0.040$ & $2.000\cdot 10^{-3}$ & $4\cdot 10^{-5}$\\
         & (\ref{fig:parameter:penalty:2}) & $10$ & $6$  & $6599$ & $26213$ & $35$ & $1000$ & $10^{6}$ & $15.60$ & $0.040$ & $2.000\cdot 10^{-3}$ & $4\cdot 10^{-5}$\\
         & (\ref{fig:parameter:penalty:3}) & $18$ & $8$  & $6599$ & $26213$ & $35$ & $1000$ & $10^{8}$ & $15.60$ & $0.040$ & $2.000\cdot 10^{-3}$ & $4\cdot 10^{-5}$\\
         & (\ref{fig:parameter:penalty:4}) & $23$ & $8$  & $6599$ & $26213$ & $35$ & $1000$ & $10^{9}$ & $15.60$ & $0.040$ & $2.000\cdot 10^{-3}$ & $4\cdot 10^{-5}$\\ 
      \bottomrule
  \end{tabular}
  \caption{The maximum and average number of semismooth Newton iterations for each parameter test.}
  \label{table:parameter_Newton}
\end{table}

In summary, we observe a rather robust behavior of the iterative method with respect to 
changes of  the parameters of the system. Especially, the method is robust 
with respect to the parameter $\epsilon$, that heavily influences the number of degrees of freedom and therefore the size of the linear 
system, and the parameter $\hp$ for which we typically observe a severe increase of 
the condition number.


\subsection{A topology change}
In this section, we demonstrate the behavior of our preconditioner under topology changes. 
As test example, we use a quantitative benchmark for rising bubble dynamics; 
see the second benchmark in \cite{Hysing_Turek_quantitative_benchmark_computations_of_two_dimensional_bubble_dynamics}. 
It considers a bubble with a very low density compared to that of the surrounding fluid. 
The initial configuration is the same as in the previous benchmark example. The 
parameters are given as $\rho_{1}=1000$, $\rho_{2}=1$, $\eta_{1}=10$, $\eta_{2}=0.1$, $Re=35$, 
$N_{1}^{0}=4513$, $N_{2}^{0}=17929$, $\epsilon=0.04$, $\tau=0.002$, $b=4\cdot 10^{-5}$, $\sigma=1.24777$, $\hp=10^{6}$. 
As stated in \cite[p.~756]{AlaV12}, the decrease in surface tension causes the bubble to develop a more non-convex shape and thin filaments, 
which eventually break off. A simulation of this benchmark test example is
illustrated  in Figure~\ref{fig:Benchmark_2}.
\begin{figure}[!htbp]
  \centering
  \begin{tabular}{ccccccc}
    \includegraphics[trim=  7cm 4cm 17cm 5cm, clip,height=3cm]{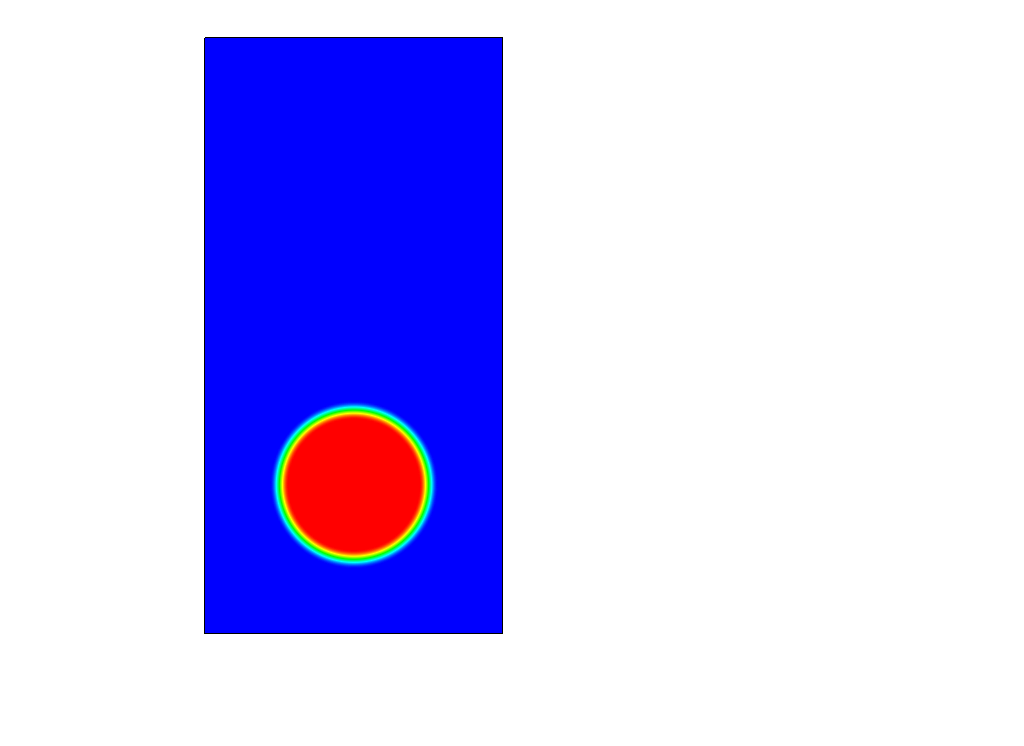} & &
    \includegraphics[trim=  7cm 4cm 17cm 5cm, clip,height=3cm]{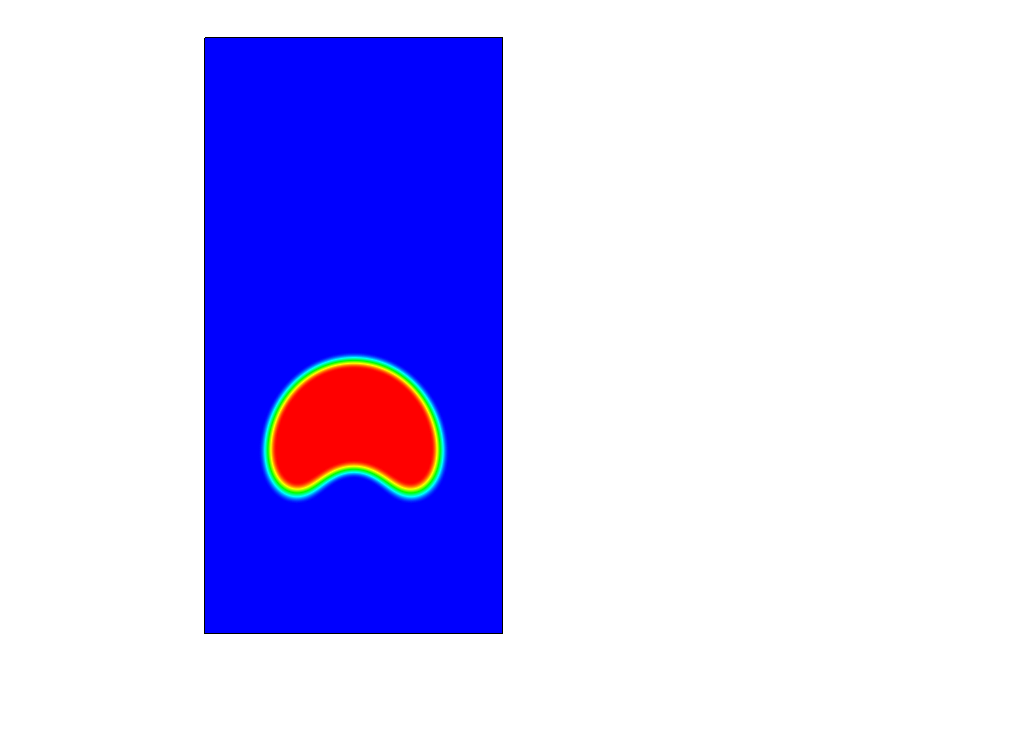} & &
    \includegraphics[trim=  7cm 4cm 17cm 5cm, clip,height=3cm]{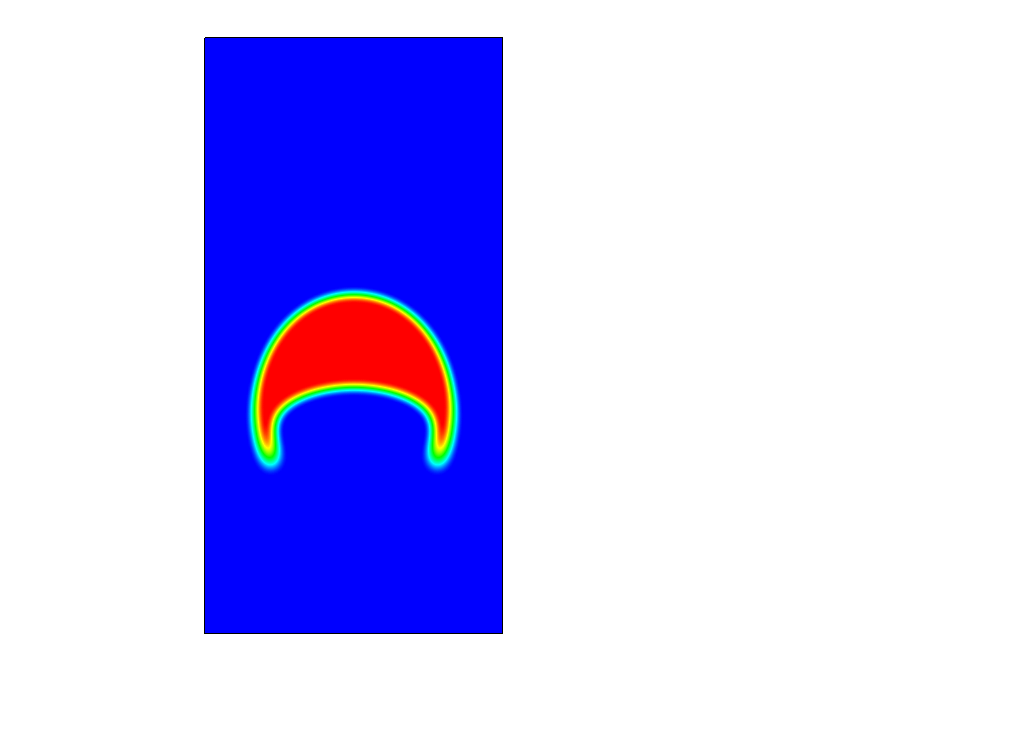} & &
    \includegraphics[trim=  7cm 4cm 17cm 5cm, clip,height=3cm]{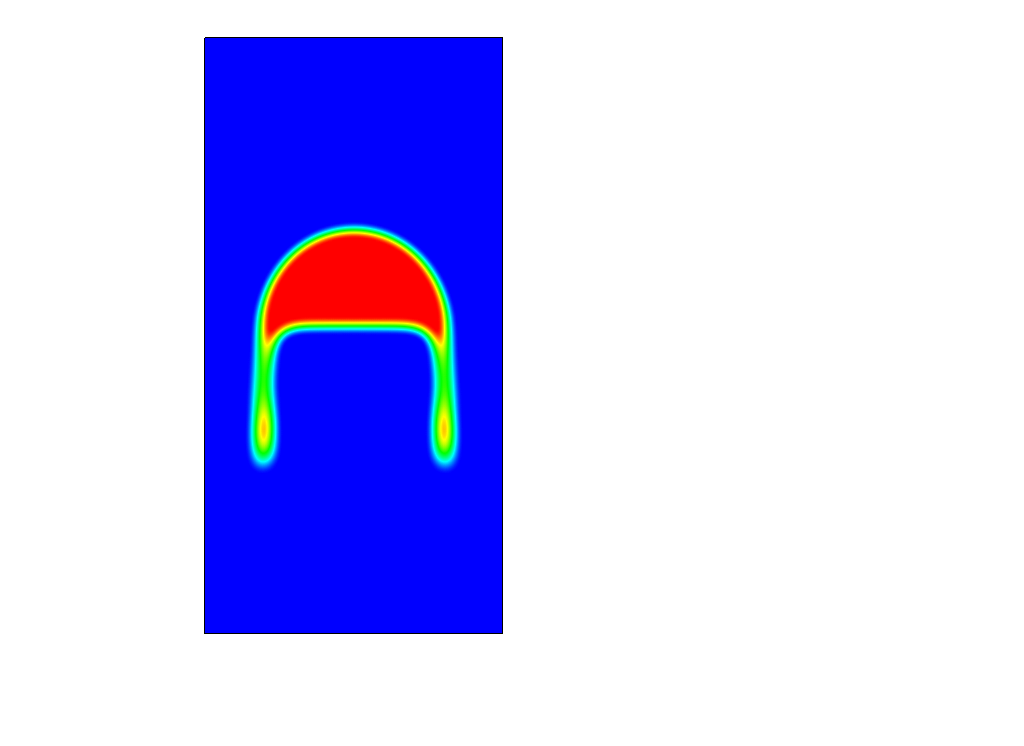}\\
    $t=0$ & & $t=1$ & & $t=2$ & & $t=3$
  \end{tabular}
  \caption{Simulation of a rising bubble under topology changes using a coupled Cahn--Hilliard Navier--Stokes model.}
  \label{fig:Benchmark_2}
\end{figure}

In this example, we set the tolerance on the relative residual norm for $S_{1}$ and $S_{2}$ to $10^{-6}$. 
Figure~\ref{fig:iterations_Bench2} demonstrates the average number of FGMRES iterations per semismooth Newton step during the 
time interval $[0,0.56]$, which consists of $280$ time steps. 
Since the crucial part is the period during the topology change, we show the
corresponding iteration numbers in Table~\ref{table:iterations_Bench2}. Note that for the results in Table \ref{table:iterations_Bench2}, 
we do not call \textsc{MATLAB}\textsuperscript{\textregistered} from C++.
Instead, we use Garcke et
al's~\cite{GarckeHinzeKahle_CHNS_AGG_linearStableTimeDisc} original C++
implementation, write the matrices and right-hand sides into text files, and
solve the problems in MATLAB using our developed preconditioner. There are two reasons for this procedure:
First, it takes a lot of time to get to the crucial instant of time using C++ with the MATLAB Engine API. 
Second, we observe abnormal terminations after long program runs. Hence, an important step for future research is an implementation of our 
preconditioner in C++ such that the MATLAB Engine API would be no longer required. 
Further, we  set the GMRES (inner solver) tolerance for the preconditioned relative residual to $10^{-2}$. 
Table~\ref{table:iterations_Bench2} shows that the FGMRES iteration numbers stay quite robust under topology changes.
As topology changes are only implicitly defined by a phase field approach, this is what we expected.

\begin{figure}[!htbp]
	\centering
	\setlength\figureheight{4.7cm} 
	\setlength\figurewidth{7.5cm}
%
%
\begin{tikzpicture}

\begin{axis}[%
width=1.5\figurewidth,
height=0.95\figureheight,
at={(0\figurewidth,0\figureheight)},
scale only axis,
separate axis lines,
every outer x axis line/.append style={black},
every x tick label/.append style={font=\color{black}},
xmin=0,
xmax=279,
major tick length=0.15cm,
minor tick length=0.075cm,
tick style={thick,color=black},
xtick={0,140,279},
every outer y axis line/.append style={black},
every y tick label/.append style={font=\color{black}},
ymin=80,
ymax=200,
ylabel style={align=center},
axis background/.style={fill=white},
legend style={legend cell align=left,align=left,draw=black},
legend pos=outer north east
]
\addplot [color=blue,line width=1.5pt,solid,forget plot]
  table[row sep=crcr]{0	184.4\\
1	116.667\\
2	108.5\\
3	98\\
4	105\\
5	112.5\\
6	112\\
7	119\\
8	125\\
9	117.5\\
10	106\\
11	105.5\\
12	104\\
13	109.5\\
14	111\\
15	105.5\\
16	107.5\\
17	107.5\\
18	105.5\\
19	102\\
20	101\\
21	97.5\\
22	94.5\\
23	101\\
24	97.5\\
25	100\\
26	116.5\\
27	114\\
28	108\\
29	85\\
30	101\\
31	96.6667\\
32	103.667\\
33	109.333\\
34	97.5\\
35	97.5\\
36	109.667\\
37	109.667\\
38	103\\
39	105\\
40	105.333\\
41	112.333\\
42	117.667\\
43	108.667\\
44	121\\
45	111\\
46	115.333\\
47	113\\
48	108.333\\
49	96.6667\\
50	140.667\\
51	117\\
52	121\\
53	99.5\\
54	108.667\\
55	102\\
56	114.333\\
57	104.667\\
58	99\\
59	124\\
60	119.667\\
61	103\\
62	106.667\\
63	105\\
64	115.667\\
65	104.333\\
66	124.75\\
67	105\\
68	121\\
69	117.667\\
70	133\\
71	128.667\\
72	120.667\\
73	110.25\\
74	103\\
75	102.667\\
76	104\\
77	116.75\\
78	123\\
79	134.25\\
80	115.667\\
81	120.75\\
82	116\\
83	116\\
84	134.75\\
85	136\\
86	102\\
87	111.667\\
88	110.333\\
89	107.333\\
90	105\\
91	101\\
92	118\\
93	112\\
94	126.667\\
95	125.333\\
96	111.5\\
97	115.5\\
98	153.25\\
99	122\\
100	119\\
101	111.25\\
102	114\\
103	135.25\\
104	153.667\\
105	119\\
106	116.25\\
107	132.75\\
108	107\\
109	123.75\\
110	116.667\\
111	99.3333\\
112	113.333\\
113	118\\
114	109.75\\
115	108\\
116	113.75\\
117	155.5\\
118	104.25\\
119	126\\
120	129.5\\
121	127.667\\
122	125.2\\
123	149.6\\
124	119\\
125	118.6\\
126	116.75\\
127	137.5\\
128	127\\
129	108\\
130	134.333\\
131	118\\
132	126.25\\
133	109\\
134	122.8\\
135	147.5\\
136	109.75\\
137	102.25\\
138	157.5\\
139	123\\
140	126.8\\
141	118.5\\
142	142.25\\
143	142.5\\
144	115.667\\
145	119.25\\
146	113.8\\
147	117.8\\
148	140.25\\
149	102\\
150	120\\
151	118\\
152	122.5\\
153	116.4\\
154	111\\
155	126.25\\
156	111.4\\
157	116\\
158	114.5\\
159	130.4\\
160	129.5\\
161	124\\
162	117.8\\
163	132.25\\
164	150.75\\
165	120\\
166	148.5\\
167	151\\
168	117.75\\
169	121.75\\
170	119.2\\
171	110.25\\
172	120.25\\
173	107.8\\
174	109.6\\
175	140.2\\
176	130.5\\
177	117.25\\
178	143\\
179	128\\
180	113.2\\
181	114.4\\
182	151\\
183	129.8\\
184	99.75\\
185	113\\
186	110.5\\
187	110\\
188	159\\
189	121\\
190	124.8\\
191	126.4\\
192	107.8\\
193	124.4\\
194	111.25\\
195	125.75\\
196	106.2\\
197	118.8\\
198	129.75\\
199	134\\
200	125.5\\
201	108.6\\
202	115.75\\
203	112.6\\
204	126.4\\
205	144.4\\
206	104.25\\
207	113.2\\
208	132.2\\
209	119.75\\
210	116.75\\
211	112.6\\
212	135.8\\
213	114\\
214	128.4\\
215	115.8\\
216	105.2\\
217	142.75\\
218	119.75\\
219	133.6\\
220	136.2\\
221	128.2\\
222	108.75\\
223	128.5\\
224	108\\
225	124.25\\
226	107.4\\
227	124.8\\
228	129.6\\
229	114.6\\
230	113\\
231	111.4\\
232	113.6\\
233	121\\
234	114.2\\
235	134.6\\
236	159.2\\
237	110.5\\
238	125.4\\
239	135.75\\
240	119.8\\
241	121.5\\
242	111.6\\
243	120\\
244	144.2\\
245	117.2\\
246	127.8\\
247	112.4\\
248	115.2\\
249	107.4\\
250	128\\
251	126.2\\
252	110\\
253	128.667\\
254	109.4\\
255	144.6\\
256	111.5\\
257	113.5\\
258	121.6\\
259	123.2\\
260	115.4\\
261	108.6\\
262	114.2\\
263	118.6\\
264	117.4\\
265	118.4\\
266	130\\
267	112.25\\
268	140.8\\
269	145.6\\
270	110\\
271	122.6\\
272	114.2\\
273	130.6\\
274	103.2\\
275	112\\
276	115.4\\
277	106.8\\
278	108\\
279	150.5\\
};
\end{axis}
\end{tikzpicture}%
\caption{Iteration numbers for the second benchmark example. 
The x-axis shows the time step and the y-axis the average number of FGMRES iterations per semismooth Newton step.}
	\label{fig:iterations_Bench2}
\end{figure}
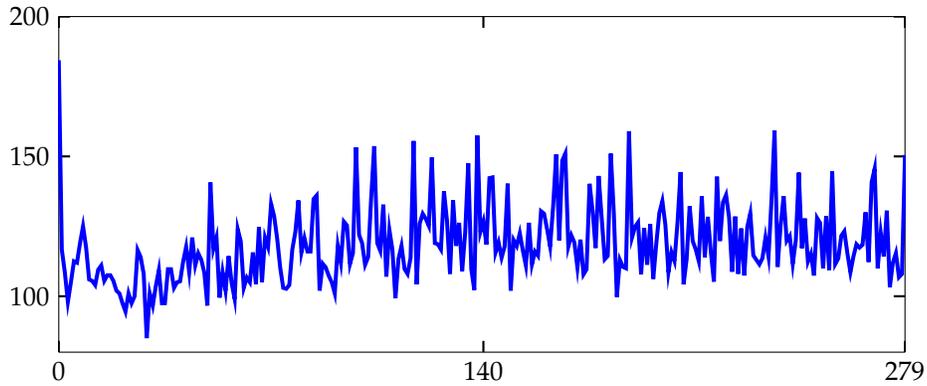


\begin{table}[!htbp]\footnotesize
 \centering
 \begin{tabular}{c|rrrrrrrrr}
     \toprule
      $t$ & $1.9990$ & $1.9995$ & $2.0000$ & $2.0005$ & $2.0010$ & $2.9985$ & $2.9990$ & $2.9995$ & $3.0000$\\\midrule
      Iter & $179$ & $167$ & $127$ & $136$ & $144$ & $173$ & $202$ & $152$ & $151$\\
     \bottomrule
 \end{tabular}
 \caption{Iteration numbers for the rising bubble under topology chages. The table shows the time with the average number of FGMRES iterations per semismooth Newton step.}
 \label{table:iterations_Bench2}
\end{table}

\section{Conclusions}
\label{sec:concl}
In this paper, we have investigated the efficient iterative solution of linear systems
that arise in the numerical simulation of a coupled 
Navier--Stokes Cahn--Hilliard system, like the model `H' from \cite{HohenbergHalperin1977} or its
generalization to fluids of different densities in \cite{AbelsGarckeGruen_CHNSmodell}.
As an example we have used the  energetically consistent discretization scheme
proposed in \cite{GarckeHinzeKahle_CHNS_AGG_linearStableTimeDisc}.

At the heart of this method lies the solution of large and sparse linear systems that arise in a 
semismooth Newton method. The systems to be solved are fully coupled. 
We have introduced and studied block-triangular preconditioners using efficient 
Schur complement approximations. 
For these approximations, we have used multilevel techniques, algebraic multigrid in our case. 
Extensive numerical experiments show a nearly parameter independent behavior of our developed preconditioners. 
Together with Garcke et 
al's \cite{GarckeHinzeKahle_CHNS_AGG_linearStableTimeDisc} adaptive spatial discretization scheme, this allows us
to perform three-dimensional experiments in an efficient way. This will be subject to future work.

\section*{Acknowledgments}
The authors would like to thank the anonymous referees for their helpful comments and 
suggestions.

\bibliographystyle{alpha}
\bibliography{bib_CHNS_Precond}

\end{document}